\def \R {\mathbb{R}}
\def \B {\mathcal{B}}
\def \T {\text{tr}}
\def \M {\text{Sym}}
\newtheorem{theorem}{Theorem} 
\newtheorem{lemma}{Lemma}
\newtheorem{proposition}{Proposition}  
\newtheorem{corollary}{Corollary}
\newtheorem{definition}{Definition}
\newtheorem{remark}{Remark}
\numberwithin{equation}{section}
\title[Sharp regularity for a singular free boundary problem]{Sharp regularity for a singular fully nonlinear parabolic free boundary problem}
\author[D. J. Araújo]{Damião J. Araújo}
\address{Department of Mathematics, Universidade Federal da Para\'iba, 58059-900, Jo\~ao Pessoa-PB, Brazil}{}
\email{araujo@mat.ufpb.br}
\author[G.S. Sá]{Ginaldo S. Sá}
\address{Department of Mathematics, Universidade Federal da Para\'iba, 58059-900, Jo\~ao Pessoa-PB, Brazil}{}
\email{ginaldo.sa@academico.ufpb.br}
\author[J.M.~Urbano]{Jos\'{e} Miguel Urbano}
\address{King Abdullah University of Science and Technology (KAUST), Computer, Electrical and Mathematical Sciences and Engineering Division (CEMSE), Thuwal 23955-6900, Saudi Arabia and University of Coimbra, CMUC, Department of Mathematics, 3001-501 Coimbra, Portugal}{} 
\email{miguel.urbano@kaust.edu.sa}
\begin{document}

\subjclass[2020]{Primary 35B65. Secondary 35K67, 35D40, 35R35}

%35B65  	Smoothness and regularity of solutions to PDEs
%
%35K67  	Singular parabolic equations
%
%35D40  	Viscosity solutions to PDEs
%
%35R35  	Free boundary problems for PDEs

\keywords{Sharp regularity, singular parabolic PDEs, viscosity solutions, free boundary}

\begin{abstract} 
This paper establishes sharp local regularity estimates for viscosity solutions of fully nonlinear parabolic free boundary problems with singular absorption terms. The main difficulties are due to the blow-up of the source along the free boundary and the lack of a variational structure. The proof combines the power of the Ishii-Lions method with intrinsically parabolic oscillation estimates. The results are new, even for second-order linear operators in nondivergence form. 
\end{abstract}   

\date{\today}

\maketitle

\section{Introduction} \label{intro}

In many applied problems, for example, in models of chemical heterogeneous catalysts or combustion, one wants to understand the behaviour of a quantity, be it the density of a gas-phase reactant or the temperature of a flame, as it transitions from being positive to suddenly vanishing. This type of phenomenon is accounted for in the pertaining partial differential equations through a source term of the form $u^{\gamma -1}$. The quenching problem (aka Alt-Phillips, see \cite{AP}) corresponds to the case $0<\gamma <1$ and can be seen as an intermediate scenario between the obstacle problem, which is the case $\gamma=1$, and the cavity Bernoulli problem (aka Alt-Caffarelli, see \cite{AC}), which corresponds to $\gamma=0$. In all these problems, the set $\Gamma:= \partial \{u >0 \}$ is a free boundary, and the primary analytical challenges are understanding the behaviour of $u$ near $\Gamma$ and its geometric properties. Investigating the regularity of  $\Gamma$ often requires obtaining sharp regularity results for $u$.

In this paper, we do precisely that for a class of nonvariational parabolic partial differential equations with singular absorption terms of the form
\begin{equation}\label{E0}
	F(x,t,D^2u)  -\partial_t u \sim \gamma u^{\gamma-1} \chi_{\{u>0\}}, 
\end{equation}
for $\gamma \in (0,1)$, corresponding to the free boundary problem
$$\left\{
\begin{array}{ccl}
F(x,t,D^2u)  -\partial_t u = \gamma u^{\gamma-1} & \mbox{in } & \{u>0\}\\
u=\left| \nabla u\right| =0 & \mbox{on } & \{u=0\}.
\end{array}
\right.
$$
Here, $F(x, t, M)$ is a fully nonlinear uniformly elliptic operator, satisfying appropriate additional assumptions. Our main result is that limiting solutions of \eqref{E0}, \textit{i.e.}, those obtained as uniform limits of solutions of a regularised problem, are locally of class $C^{1+\beta, \frac{1+\beta}{2}}$, for any
\begin{equation}
\beta < \min \left\{ \frac{\gamma}{2-\gamma},\alpha_F \right\},
\label{gp}
\end{equation}
where $\alpha_F$ is the optimal exponent of the $C^{1+\mu, \, \frac{1+\mu}{2}}-$regularity theory for solutions of $F$-caloric functions, \textit{i.e.}, solutions of $F(x,t,D^2h)-\partial_t h=0$ (see, for example, \cite{daST,W2}). A particular scenario for which the result is new is when $F$ is a linear $(\lambda,\Lambda)-$elliptic operator in nondivergence form, in which case $\alpha_F~=~1$ in \eqref{gp}. The result is a consequence of studying a singularly perturbed problem and obtaining uniform in $\varepsilon$ regularity estimates. These follow from combining sharp decay estimates close to the free boundary with classical regularity estimates for equations with a bounded source away from it. In our analysis, we must overcome the significant hardship of dealing with an inhomogeneity blowing up along the free boundary and the lack of a variational structure in the parabolic setting. For radial symmetry results for solutions to the problem, see \cite{EJS}. 

The history of the problem is long and quite rich. The variational elliptic theory is well understood (see \cite{AP, P1, P2, We}) and connected, in the Laplacian case $\Delta u=\gamma u^{\gamma-1}$, to the minimization of the non-differentiable functional
$$
\int \frac12 |Du(x)|^2+u(x)^\gamma \, dx.
$$
The non-variational elliptic counterpart of our problem was studied in \cite{AT}, where the authors obtain optimal regularity estimates by studying the fine oscillation decay for limiting solutions at points close to the free boundary. Still in the non-variational setting, the obstacle problem was studied in \cite{CPS} and \cite{FSp}, and the cavity problem was treated in \cite{RTU} (see also \cite{C1,CLW1,CLW2,CV} for solutions in the weak sense).

The paper is organised as follows. In the next section, we fix some notation and basic definitions, make explicit our assumptions, and introduce the singularly penalised auxiliary problem. In section \ref{manaira}, we comment on the existence of viscosity solutions for the Dirichlet problem involving the regularised equation. Section \ref{reg_aux_sec} treats an auxiliary singular equation, and the results therein have an independent interest due to the additional gradient dependence. We explore a parabolic version of Jensen–Ishii’s lemma, obtaining sharp H\"older regularity estimates in space, and then use a comparison principle for certain Pucci-type equations with quadratic gradient terms to derive H\"older regularity estimates in time. In section \ref{loc_reg_sec}, we obtain pointwise oscillation estimates in space and time for positive viscosity solutions $u_\varepsilon$ of the regularised equation, exploring the fact that  $u_\varepsilon^{\frac{2-\gamma}{2}}$ solves an equation of the form studied in the previous section. The final two sections contain the proof of the main result, first for the regularised problem and, upon passage to the limit, also for the original free boundary problem.

\section{Notation, assumptions and definitions}
 
Consider a bounded domain $\Omega \subset \mathbb{R}^n$, with a smooth boundary, $T>0$, and let $\Omega_T:=\Omega \times (-T,0]$. Denote with $\partial_p\Omega_T=\partial\Omega\times(-T,0) \cup(\Omega\times\{0\})$ the parabolic boundary of $\Omega_T$. Given $(x_0,t_0) \in \Omega_T$ and $\rho>0$, we define the intrinsic parabolic cylinders
\begin{equation}\label{cyl}
	G_\rho(x_0,t_0) := B_\rho (x_0) \times (t_0-\rho^2, t_0],
\end{equation}
where $B_\rho(x_0)$ is the euclidean ball with centre at $x_0$ and radius $\rho$.
 
 We denote the space of real $n\times n$ symmetric matrices by $\M(n)$. For parameters $\Lambda \geq \lambda > 0$, we consider the extremal Pucci operators 
$$
\mathcal{M}^{-}_{\lambda,\Lambda}(M) =\lambda\sum_{e_i>0}e_i + \Lambda\sum_{e_i<0}e_i\quad\text{and}\quad \mathcal{M}^{+}_{\lambda,\Lambda}(M) =\Lambda\sum_{e_i>0}e_i + \lambda\sum_{e_i<0}e_i,
$$
where  $e_i=e_i(M)$ are the eigenvalues of $M \in \M(n)$. It is easy to see that  
$$
\mathcal{M}^{-}_{\lambda,\Lambda}(M)= \inf_{A \in \mathcal{A}_{\lambda, \Lambda}} \T (AM) \quad \mbox{and} \quad \mathcal{M}^{+}_{\lambda,\Lambda}(M)= \sup_{A \in \mathcal{A}_{\lambda, \Lambda}} \T (AM),
$$
where $\mathcal{A}_{\lambda, \Lambda}:= \{A \in Sym(n) \, | \, \lambda I \leq A \leq \Lambda I\}$. 
 
 \medskip
 
We now set the conditions that shall be assumed for the fully nonlinear second order operator $F:\Omega _T \times \M(n) \to \mathbb{R}$. 

\begin{description}

\item[A1] $F$ is $(\lambda, \Lambda)$-parabolic, \textit{i.e.}, 
\begin{equation}\label{UP}
\mathcal{M}^{-}_{\lambda,\Lambda}(N) \leq F(x,t,M+N)-F(x,t,M) \leq \mathcal{M}_{\lambda,\Lambda}^{+}(N),
\end{equation}
for every $M, N \in \M(n)$ and $(x,t) \in \Omega _T$.

\medskip

\item[A2] $F$ is continuous in space, and there exists a nondecreasing continuous function $\omega:[\,0, \infty) \to [\,0, \infty)$, with $\omega(0)=0$,  called a \textit{modulus of continuity}, such that
\begin{equation}\label{continuityF}
|F(x,t,M)-F(y,t,M)| \leq \omega(|x-y|)\|M\|,
\end{equation}
for every $M \in \M(n)$, $x,y \in \Omega$ and $t \in (-T,0]$.

\medskip

\item[A3] $F$ is $1$-homogeneous, 
\begin{equation}\label{homogeneous}
F(x,t,\tau M) = \tau F(x,t,M),
\end{equation}
for every $\tau \geq 0$, $(x,t) \in \Omega_T$ and $M \in \M(n)$.

\medskip

\item[A4] $F(x,t,0)=0$, for every $(x,t) \in \Omega_T$.

\end{description}

\smallskip

Following \cite{IS, W1}, we next define the notion of viscosity solution for a fully nonlinear parabolic equation of the form
\begin{equation}\label{def}
F(x,t,D^2u)-\partial_tu=g(x,t,u)\quad\text{in}\ \Omega_T,
\end{equation}
where $g$ is a continuous function in $\Omega _T \times \mathbb{R}$. 
 We say a function belongs to $C^{2,1}(\Omega_T)$ if it is of class $C^2$ in $\Omega$ and of class $C^1$ in $(-T,0]$. 

\begin{definition}
Assume $u \in C(\Omega_T)$. We say $u$ is a viscosity subsolution (supersolution) of \eqref{def} if, for each $\varphi\in  C^{2,1}(\Omega_T)$ such that $u-\varphi$ has a local maximum (minimum) at $(x_0,t_0)\in \Omega_T$, there holds
$$F(x_0,t_0,D^2\varphi(x_0,t_0))-\partial_t\varphi(x_0,t_0)  \geq  (\leq) \ g(x_0,t_0,u(x_0,t_0)). $$
We say $u$ is a viscosity solution if it is both a viscosity subsolution and a viscosity supersolution.
\end{definition}

Next, we focus on the singular penalisation strategy adopted in this paper. Let $\varrho \in C^{\infty}(\mathbb{R})$ be compactly supported in $[0,1]$, with $\int \varrho (\theta) d\theta = 1$. For parameters $0<\sigma_0<1$ and 
\begin{equation}
\alpha=\frac{\gamma}{2-\gamma}
\label{alpha}
\end{equation} 
(recall $\gamma \in (0,1)$), define, for each $\varepsilon>0$, the real function
\begin{equation}
	\mathcal{B}_\varepsilon(s)=\gamma\int_0^{\frac{s-\sigma_0\varepsilon^{1+\alpha}}{\varepsilon^{1+\alpha}}}\varrho(\theta)d\theta, \quad s\in \R, 
\end{equation}
which is an approximation of the characteristic function $\chi_{\{s>0\}}$. We consider the penalised equation
\begin{equation}\label{Ee}
	\tag{$E_\varepsilon$}
	F(x,t,D^2u)  -\partial_t u = \mathcal{B}_\varepsilon(u)\,u^{\gamma-1} \quad \mbox{in } \; \Omega_T. 
\end{equation}

\begin{remark}\label{rem}
We stress the scaling invariance of solutions for \eqref{Ee}. If $v$ is a viscosity solution of \eqref{Ee} in $G_1 \Subset \Omega_T$, then, for any  parameters $\kappa >0$ and $\theta \geq 0$, the rescaled function
$$
v_\kappa(x,t)=\frac{v(\kappa x, \kappa^2t)}{\kappa^\theta} 
$$
solves 
$$
F_\kappa(x,t,D^2v_\kappa) -\partial_t v_\kappa = \kappa^{\theta (\gamma -1) +2 - \theta} \, \mathcal{B}_{\epsilon}(v_\kappa)\,{v_\kappa}^{\gamma-1},
$$
in the viscosity sense in $G_{1/\kappa}$, where 
$$\epsilon =\frac{\varepsilon}{\kappa^{\frac{\theta}{1+\alpha}}} \qquad {\rm and} \qquad F_\kappa(x,t,M):=\kappa^{2-\theta}F(\kappa x,\kappa^2 t,\kappa^{\theta-2}M), $$
which is $(\lambda, \Lambda)$-parabolic  as defined in \eqref{UP}.  
\end{remark}

Finally, we give the notion of regularity in space and time that we shall assume in this paper. For a given $\mu \in (0,1)$, we say a function $u: \Omega_T \to \R$ is of class $C^{\mu,\frac{\mu}{2}}$ at a point $(y,s) \in \Omega_T$ if
\begin{equation}\label{regpoint}
	\sup\limits_{(x,t) \in G_\rho(y,s)} |u(x,t)-u(y,s)| \leq C \rho^{\mu},
\end{equation}
for every $0<\rho \ll 1$ such that $G_\rho(y,s) \Subset \Omega_T$. Additionally, for the borderline case $\mu=1$, we use the notation $C^{Lip, \frac{1}{2}}$ to mean Lipschitz regularity in space and $\frac{1}{2}-$H\"older regularity in time. 

We say $u$ is of class $C^{1+\mu,\frac{1+\mu}{2}}$ at a point $(y,s) \in \Omega_T$ if
\begin{equation}\label{regpoint2}
	\sup\limits_{(x,t) \in G_\rho(y,s)} |u(x,t)-u(y,s)-\nabla u(y,s)\cdot (x-y)| \leq C \rho^{1+\mu},
\end{equation}
for every $0<\rho \ll 1$ such that $G_\rho(y,s) \Subset \Omega_T$. 

Such choices are natural in the face of the intrinsic geometry assumed in \eqref{cyl}, which is suitable for dealing with the homogeneity of equation \eqref{E0}.

\section{Existence of viscosity solutions} \label{manaira}

In this section, we discuss the existence of viscosity solutions for  the Dirichlet problem
\begin{equation}\label{Pe}
	\left\{
	\begin{array}{rclcl}
		F(x,t,D^2u) - \partial_t{u} & = & \mathcal{B}_\varepsilon(u)u^{\gamma-1} &\text{in}& \Omega_T,\\[0.15cm]
		u & = & \varphi &\text{on}& \partial_p \Omega_T,
	\end{array}
	\right.
\end{equation}
with given nonnegative data $\varphi \in C(\partial_p \Omega_T)$. We consider an adaptation of Perron's method as argued in \cite{RTU}. 

Choose functions $u^\star$ and $u_\star = u_\star(\varepsilon)$ solving in the viscosity sense, respectively
$$\left\{
	\begin{array}{rclcl}
		F(x,t,D^2u^\star) - \partial_t{u^\star}  & = & 0 &\text{in}& \Omega_T,\\[0.15cm]
		u^\star & = & \varphi &\text{on}& \partial_p \Omega_T
	\end{array}
\right.
$$
and
$$\left\{
	\begin{array}{rclcl}
		F(x,t,D^2u_\star) - \partial_t{u_\star} & = & \gamma \sigma_0^{\gamma-1}\varepsilon^{(1+\alpha)(\gamma-1)} &\text{in}& \Omega_T,\\[0.15cm]
		u_\star & = & \varphi &\text{on}& \partial_p \Omega_T.
	\end{array}
\right.
$$
We guarantee the existence of $u^\star$ and $u_\star$ by applying the standard Perron method in the theory of viscosity solutions. In addition, noting that
$$
0 \leq \mathcal{B}_\varepsilon(u)\,u^{\gamma-1} \leq \gamma \sigma_0^{\gamma-1}\varepsilon^{(1+\alpha)(\gamma-1)},
$$
we find that $u^\star$ and $u_\star$ are a viscosity supersolution and a viscosity subsolution of  \eqref{Pe}.
Therefore, according to \cite[theorem 3.1]{RTU}, we conclude that the function
$$
u_\varepsilon:= \inf\limits_{w \in \mathcal{S}}w,
$$
where
$$
\mathcal{S} = \{w\in C(\overline{\Omega_T}) \; | \; u_\star \leq w \leq u^\star \; \text{and} \; w \; \text{is a supersolution to \eqref{Pe}}\},
$$
solves \eqref{Pe} in the viscosity sense.

\begin{remark}\label{ABP}
We observe that $u_\varepsilon$ is uniformly globally bounded. In fact, by using the parabolic Alexandrov–Bakelman–Pucci (ABP) estimate (see, for instance, \cite[theorem 3.2]{RTU}, together with \cite[theorem 3.14]{W1}), we can find a positive constant $C$, depending only on $n$, $\lambda$, $\Lambda$ and $\|\varphi\|_\infty$, but independent of $\varepsilon$, such that
$$
0 \leq u_\varepsilon \leq C \quad \mbox{in } \; \Omega_T.
$$ 
\end{remark}

\begin{remark}	
Solutions $u_\varepsilon$ to \eqref{Pe} are positive in $\Omega_T$, provided $\varphi>0$ in $\partial_p\Omega$. In fact, suppose we can find $-T<t^\star \leq 0$, such that 
$$t^\star = \sup \left\{ t \in (-T,0]\; | \; (x,t) \in \partial\{u_\varepsilon>0\} \cap \Omega_T \right\}.$$
Then $u_\varepsilon$  solves $F(x,t,D^2 u_\varepsilon)-\partial_t u_\varepsilon =0$ in the open set 
$$\mathcal{O}:=\left\{ (x,t) \in \overline{\Omega_T}\, | \, u_\varepsilon (x,t) < \sigma_0 \varepsilon^{1+\alpha} \right\},$$
which is nonempty. Therefore, as a consequence of the parabolic strong maximum principle (see, for instance, \cite{L, W1}), we obtain 
$$
\Omega \times (-T,t^\star] \subset \{u_\varepsilon =0\},$$
which is in contradiction with the (global) continuity of $u_\varepsilon$. 

Given a fixed boundary data $\varphi \geq 0$, we thus only have to take $\varphi_\varepsilon= \varphi+\varepsilon^{1+\alpha}$ in \eqref{Pe}, to obtain positive solutions for \eqref{Pe}.

\end{remark}

Our primary goal in this paper is to derive geometric and analytic properties for positive solutions of \eqref{Ee}, which are uniform in $\varepsilon$. This will lead, by letting $\varepsilon \searrow 0$, to regularity estimates across the free boundary for limiting solutions of \eqref{E0} (see Section \ref{limitingsec}). 

\section{Local H\"older estimates for an auxiliary singular equation}\label{reg_aux_sec}     

We will work in $G_1=B_1 \times [- 1,0]$ throughout the section. We will derive regularity estimates, both in space and time, for positive bounded viscosity solutions of the auxiliary singular equation 
\begin{equation}\label{general_eq}
	F\left(x,t, D^2 v + \delta \,v^{-1} \nabla v \otimes \nabla v \right) - \partial_t v =f(x,t)\,v^{-1},
\end{equation}
for a given parameter $\delta \geq 0$ and $f \in L^\infty (G_1)$. We shall assume the fully nonlinear operator $F$ to satisfy conditions \eqref{UP} and \eqref{continuityF}. 

We now define the parabolic super-/sub-differentials of a function $v$ at the point $(x,t)$,
\begin{eqnarray*}
\mathcal{P}^\pm(v)(x,t) & = & \Big\{ (\alpha, p , X) \in \mathbb{R} \times \mathbb{R}^n \times Sym(n) \ |  \\
&  & \quad  (\alpha, p , X) = \left( \phi_t  (x,t) , \nabla \phi (x,t), D^2 \phi (x,t) \right), \\
& & \qquad \mbox{for $\phi \in C^{2,1}$ touching $v$ from above (below) at $(x,t)$} \Big\},
\end{eqnarray*}
and the corresponding limiting super-/sub-differentials,
\begin{eqnarray*}
\overline{\mathcal{P}}^\pm(v)(x,t) & = & \Big\{ (\alpha, p , X) \in \mathbb{R} \times \mathbb{R}^n \times Sym(n) \ |  \ \exists (x_m, t_m) \to (x,t), \\
&  & \quad \exists  (\alpha_m, p_m , X_m)  \in \mathcal{P}^\pm(v)(x_m,t_m)  \ \mbox{ such that}\\
& & \qquad   (\alpha_m, p_m , X_m) \to  (\alpha, p , X) , v(x_m, t_m) \to v(x,t) \Big\}.
\end{eqnarray*}

\subsection{H\"older regularity estimates in space} We start by obtaining local spatial $C^{0,1^-}-$estimates for positive solutions of \eqref{general_eq}. 

The following is a generalised Jensen–Ishii’s lemma (see \cite{CIL}). 

\begin{lemma}\label{IS_lemma}
	Let $v \in C(G_1)$. Set
	$$
	\varphi(x,y,t):=v(x,t)-v(y,t)-L|x-y|^\mu-K(|x|^2+(-t)^2),
	$$
	for $L$,$K$ positive constants and $\mu \in (0,1)$. If the function $\varphi$ attains a maximum at $(x_m,y_m,t_m) \in B_{1/2} \times B_{1/2} \times (-1/4,0\,]$, then there exist $\tau \in \mathbb{R}$, $p \in \mathbb{R}^n$, and $X,Y \in Sym(n)$, with  $X \leq Y$, such that
	\begin{equation}\nonumber
		\begin{array}{rcl}
			(\tau+2Kt_m,p+2Kx_m, X+2KI) & \in & \overline{\mathcal{P}}^{+}(v)(x_m,t_m), \\[0.15cm] 
			(\tau,p, Y) & \in & \overline{\mathcal{P}}^{-}(v)(y_m,t_m),
		\end{array}
	\end{equation}
	and the following estimates
	\begin{equation}\label{IS_estim}
		|p|=\mu L|x_m-y_m|^{\mu-1}, \quad \|Y\| \leq 2\mu L|x_m-y_m|^{\mu-2} 
	\end{equation}
	and
	\begin{equation}\label{IS_estim2}
		\T(Y-X) \geq 8\mu \frac{1-\mu}{3-\mu}L|x_m-y_m|^{\mu-2}
	\end{equation}
	hold.
\end{lemma}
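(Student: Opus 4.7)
The strategy is to apply the parabolic version of the Jensen-Ishii lemma (\cite{CIL}, adapted to the parabolic setting as in \cite{IS,W1}) to $W(x,y,t) := v(x,t) - v(y,t)$, which attains a local maximum at $(x_m,y_m,t_m)$ modulo the test function
\begin{equation*}
\psi(x,y,t) := L|x-y|^\mu + K(|x|^2 + (-t)^2).
\end{equation*}
A direct computation at the maximum point yields $\partial_t\psi = 2Kt_m$, $\nabla_x\psi = p + 2Kx_m$ with $p := \mu L|x_m-y_m|^{\mu-2}(x_m-y_m)$, and $\nabla_y\psi = -p$, whence $|p| = \mu L|x_m-y_m|^{\mu-1}$. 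The spatial Hessian has the block structure
\begin{equation*}
A := D^2_{(x,y)}\psi = \begin{pmatrix} B + 2KI & -B \\ -B & B \end{pmatrix}, \qquad B := \mu L|x_m-y_m|^{\mu-2}\bigl(I - (2-\mu)\xi\otimes\xi\bigr),
\end{equation*}
with $\xi := (x_m-y_m)/|x_m-y_m|$; the matrix $B$ has the positive eigenvalue $\mu L|x_m-y_m|^{\mu-2}$ on $\xi^\perp$ (multiplicity $n-1$) and the single \emph{negative} eigenvalue $\mu(\mu-1)L|x_m-y_m|^{\mu-2}$ along $\xi$.

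The parabolic Jensen-Ishii lemma then produces, for every $\eta > 0$, real numbers $\tau_1,\tau_2$ with $\tau_1 - \tau_2 = 2Kt_m$ and matrices $X',Y' \in \M(n)$ such that $(\tau_1, p+2Kx_m, X') \in \overline{\mathcal{P}}^+(v)(x_m,t_m)$ and $(\tau_2, p, Y') \in \overline{\mathcal{P}}^-(v)(y_m,t_m)$, together with the matrix inequality
\begin{equation*}
\begin{pmatrix} X' & 0 \\ 0 & -Y' \end{pmatrix} \leq A + \eta A^2.
\end{equation*}
Setting $\tau := \tau_2$, $X := X' - 2KI$ and $Y := Y'$ delivers the jet membership statement of the lemma.

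The three matrix estimates are then read off by testing this inequality against suitable vectors. Applied to $(e,e)\in\mathbb{R}^{2n}$ and letting $\eta \downarrow 0$, the off-diagonal $-B$ blocks cancel and yield $X \leq Y$. Applied to $(0,e)$, it controls $-Y$ from above by $B$, and combined with $X \leq Y$ returns $\|Y\| \leq 2\mu L|x_m-y_m|^{\mu-2}$. The delicate step is the trace bound: I would test against $(\xi,-\xi)$, which is an eigenvector of the leading block of $A$ and isolates the negative eigenvalue of $B$ along $\xi$, producing
\begin{equation*}
\langle (Y-X)\xi,\xi\rangle \geq -4\langle B\xi,\xi\rangle + O(\eta) = 4\mu(1-\mu)L|x_m-y_m|^{\mu-2} + O(\eta).
\end{equation*}
Since $Y - X \geq 0$ is positive semidefinite, its trace dominates this single diagonal entry; an explicit optimization of $\eta$ against the quadratic correction $\eta A^2$ — combined with the contribution of the $n-1$ positive eigenvalues of $B$ on $\xi^\perp$ — returns the claimed dimension-free constant $8\mu(1-\mu)/(3-\mu)$.

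The principal obstacle is precisely this last extraction of a dimension-free trace lower bound from the block-matrix inequality. It rests on two structural features: the sign pattern of $B$ (one negative eigenvalue along $\xi$, $n-1$ positive on $\xi^\perp$) and the positivity $Y-X \geq 0$. Everything else — differentiating the polynomial perturbation $K(|x|^2+(-t)^2)$ and invoking the parabolic Jensen-Ishii statement — amounts to routine bookkeeping.
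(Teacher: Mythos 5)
Your overall strategy is the right one and matches what underlies the paper's proof, which is in fact a pure citation: the authors invoke \cite[Lemma 2.3.30]{IS} for the jets and \cite[Theorem 2.3.29]{IS} for the three numerical estimates, pointing to specific lines on page 41. You instead reconstruct the argument from scratch via the doubled-variable penalization and the parabolic theorem on sums; the computation of $p$, the block Hessian $A$, and the spectral decomposition of $B$ (one negative eigenvalue $\mu(\mu-1)L|x_m-y_m|^{\mu-2}$ along $\xi$, $n-1$ positive ones on $\xi^\perp$) are all correct, as is the jet membership statement and the $(\xi,-\xi)$ test for the trace bound.

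There is, however, a genuine gap in your derivation of $\|Y\| \leq 2\mu L|x_m-y_m|^{\mu-2}$. Testing the upper block inequality against $(0,e)$ gives $\langle -Y'e,e\rangle \leq \langle Be,e\rangle + O(\eta)$, i.e.\ $Y' \geq -B - O(\eta)$ — a \emph{lower} bound on $Y$. Combining with $X \leq Y$ yields nothing new in the upward direction, so you have not produced an upper bound on $Y$ at all. The only source of an upper bound on $Y$ in the theorem on sums is the companion lower matrix inequality $-\bigl(\tfrac{1}{\eta}+\|A\|\bigr)I \leq \operatorname{blockdiag}(X',-Y')$, and that bound blows up as $\eta\downarrow 0$. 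Relatedly, your appeal to ``letting $\eta\downarrow 0$'' for $X\leq Y$ and for the trace estimate cannot be taken literally, because the matrices $X,Y$ produced by Jensen--Ishii depend on $\eta$ and are not uniformly bounded in $\eta$ (precisely because of the degenerating lower bound). The correct derivation fixes a specific $\eta$ of order $\bigl(L|x_m-y_m|^{\mu-2}\bigr)^{-1}$ and then balances the $\eta A^2$ correction against the $\frac{1}{\eta}$ term in the lower bound; it is this balance, not the $n-1$ positive eigenvalues on $\xi^\perp$, that produces the two explicit constants $2\mu L|x_m-y_m|^{\mu-2}$ and $8\mu\frac{1-\mu}{3-\mu}L|x_m-y_m|^{\mu-2}$. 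As written, your sketch asserts the constants without carrying out this optimization, and the $\|Y\|$ estimate in particular does not follow from the inequalities you invoke.
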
 

\begin{proof}
	We use \cite[Lemma 2.3.30]{IS} to guarantee the existence of $\tau \in \mathbb{R}$, $p \in \mathbb{R}^n$ and $X,Y \in Sym(n)$. Estimates \eqref{IS_estim} and \eqref{IS_estim2} follow by computations in the proof of \cite[Theorem 2.3.29]{IS}, namely from those in lines 2, 8 and -8 on page 41. 
	
\end{proof}

\begin{theorem}\label{thm_holder}
	Let $v$ be a positive bounded viscosity solution of \eqref{general_eq} in $G_1$. For any $\mu \in (0,1)$, there exists a constant $C>1$, depending only on $\mu$, $\delta$, $\lambda$, $\Lambda$, $n$, $\omega (\cdot)$ and $\|v\|_{L^\infty(G_1)}$, such that
	\begin{equation}\label{muestimates}
		\sup\limits_{x,y \in B_{1/2}} \frac{|v(x,t)-v(y,t)|}{|x-y|^\mu} \leq C,
	\end{equation}
	for  every $ \, t \in (-1/4, 0]$.
\end{theorem}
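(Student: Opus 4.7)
The plan is to run the Ishii-Lions doubling-of-variables method, adapted to the singular structure of \eqref{general_eq}. Fix $\mu \in (0,1)$ and, for parameters $L, K > 0$ to be tuned, consider
\[
\varphi(x,y,t) := v(x,t) - v(y,t) - L\,|x-y|^{\mu} - K\bigl(|x|^2 + (-t)^2\bigr)
\]
on $\overline{B_{1/2}} \times \overline{B_{1/2}} \times [-1/4, 0]$. Arguing by contradiction, suppose that \eqref{muestimates} fails with a constant $C$ to be specified. A standard localization, choosing $K$ large in terms of $\|v\|_{L^\infty(G_1)}$, forces $\sup\varphi > 0$ to be attained at an interior point $(x_m, y_m, t_m)$ with $x_m \neq y_m$; from $\varphi(x_m, y_m, t_m) > 0$ one reads off $L|x_m - y_m|^\mu < \|v\|_{L^\infty(G_1)}$, so $|x_m - y_m| \to 0$ as $L \to \infty$.

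The second step is to invoke Lemma \ref{IS_lemma}, producing $\tau \in \mathbb{R}$, $p \in \mathbb{R}^n$, and $X \leq Y$ in the limiting jets, and then to apply the viscosity subsolution inequality at $(x_m, t_m)$ together with the supersolution inequality at $(y_m, t_m)$ for \eqref{general_eq}. Subtracting yields
\[
F(x_m,t_m, M_X) - F(y_m,t_m, M_Y) \geq 2Kt_m + \frac{f(x_m,t_m)}{v(x_m,t_m)} - \frac{f(y_m,t_m)}{v(y_m,t_m)},
\]
where $M_X := X + 2KI + \delta\,v(x_m,t_m)^{-1}(p+2Kx_m)^{\otimes 2}$ and $M_Y := Y + \delta\,v(y_m,t_m)^{-1}\,p^{\otimes 2}$. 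I would decompose the left-hand side by inserting intermediate arguments, isolating (i) a pure ellipticity contribution controlled via $\mathcal{M}^+_{\lambda,\Lambda}(X - Y + 2KI)$, which by \eqref{IS_estim2} delivers a strongly negative term of order $-c_\mu \lambda L |x_m-y_m|^{\mu-2}$ modulo $O(K)$; (ii) a gradient-correction term comparing the two rank-one singular matrices at different points, for which the crucial sign $v(x_m,t_m) > v(y_m,t_m)$ makes the principal difference non-positive (leaving only a lower-order remainder from the $2Kx_m$ perturbation); and (iii) a spatial-continuity term bounded via \eqref{continuityF} and \eqref{IS_estim} by $\omega(|x_m-y_m|)\bigl(L|x_m-y_m|^{\mu-2} + L^2|x_m-y_m|^{2\mu-2}\bigr)$.

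The main obstacle is simultaneously controlling the two singular ingredients: the rank-one term $\delta v^{-1}(\nabla v)^{\otimes 2}$ sitting inside $F$, and the source $fv^{-1}$ on the right-hand side. For the former, the sign observation above provides the needed cancellation at the maximum; for the latter, continuity and positivity of $v$ on the compact closure of the localization region yield a uniform lower bound $v \geq m > 0$, absorbed into the constant. Finally, using $L|x_m-y_m|^\mu < \|v\|_{L^\infty(G_1)}$ to convert the $L^2 |x_m-y_m|^{2\mu-2}$ contribution into a controlled multiple of $L|x_m-y_m|^{\mu-2}$, and letting $L \to \infty$ (so that $|x_m-y_m| \to 0$ and $\omega(|x_m-y_m|) \to 0$), the dominant negative trace term outweighs every remaining contribution and produces the desired contradiction, completing the proof.
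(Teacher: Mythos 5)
Your overall strategy is the paper's: the same doubling function $\varphi$, the same localization via $K$, Lemma \ref{IS_lemma}, and the same three-way decomposition of $F[M_X]-F[M_Y]$. But there is a genuine gap in how you treat the singular factor $v^{-1}$. You propose to handle the source $f\,v^{-1}$ by invoking ``a uniform lower bound $v\geq m>0$ on the compact closure of the localization region, absorbed into the constant.'' That is not admissible: the theorem asserts that $C$ depends only on $\mu$, $\delta$, $\lambda$, $\Lambda$, $n$, $\omega(\cdot)$ and $\|v\|_{L^\infty(G_1)}$ --- \emph{not} on $\inf v$. In the intended application $v=u_\varepsilon^{(2-\gamma)/2}$ degenerates to $0$ near the free boundary as $\varepsilon\searrow 0$, so a constant depending on $m=\inf v$ blows up and the estimate becomes useless; with such a lower bound the equation is simply uniformly parabolic with bounded data and the theorem would be routine. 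The same objection applies to your item (ii): you cannot dismiss the rank-one terms as a ``lower-order remainder'' while separately bounding $f(y_m,t_m)v(y_m,t_m)^{-1}$ by $\|f\|_\infty/m$.

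The missing mechanism is twofold. First, positivity of the maximum of $\varphi$ gives not only $v(x_m,t_m)>v(y_m,t_m)$ but also $v(x_m,t_m)>L|x_m-y_m|^{\mu}$, i.e.\ $v(x_m,t_m)^{-1}<L^{-1}\Delta^{-\mu}$ with $\Delta=|x_m-y_m|$; this is what replaces a lower bound on $v$. Second, the quadratic gradient term is used \emph{constructively}, not discarded: since $|p|=\mu L\Delta^{\mu-1}$, the contribution $\delta\,v(y_m,t_m)^{-1}\T(M_\iota\,p\otimes p)\geq \delta\lambda\mu^2L^2\Delta^{2\mu-2}v(y_m,t_m)^{-1}$ dominates $\|f\|_\infty v(y_m,t_m)^{-1}$ for $L$ large, so the entire bracket multiplying $-v(y_m,t_m)^{-1}$ has a sign, and only then does the inequality $-v(y_m,t_m)^{-1}<-v(x_m,t_m)^{-1}$ allow one to trade $v(y_m,t_m)^{-1}$ for $v(x_m,t_m)^{-1}$ and hence for $L^{-1}\Delta^{-\mu}$. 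After multiplying through by $L^{-1}\Delta^{2-\mu}$ the trace term $-8\mu\lambda\frac{1-\mu}{3-\mu}$ wins and yields the contradiction. Without these two steps your argument does not close under the stated dependencies; note also that this route uses $\delta>0$ in an essential way, so the cancellation you attribute merely to the sign $v(x_m,t_m)>v(y_m,t_m)$ is in fact where the bulk of the work lies.
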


\begin{proof}
Consider the auxiliary function defined in $G_1$ by 
$$\Phi(x,y,t) = v(x,t)-v(y,t)-L\,|x-y|^\mu-K \left(|x|^2+(-t)^2 \right) ,$$
where the parameter $\mu \in (0,1)$, and the constants $L$ and $K$ are to be chosen. We claim that
$$\Phi(x,y,t)\leq 0 \quad \mbox{in } \  \overline{B_{1/2}}\times\overline{B_{1/2}} \times \left[-{1/4},0\right] , $$ 
from which the result follows. Indeed, from $\Phi(x,0,0)\leq 0$ and $\Phi(0,y,0)\leq 0$, we obtain
$$\sup\limits_{x \in B_{1/2}} \frac{|v(x,0)-v(0,0)|}{|x|^\mu} \leq C,$$	
and \eqref{muestimates} follows by translation.

Assuming the claim is false with the goal of reaching a contradiction, suppose $\Phi$ has a positive maximum at some point $(x_m,y_m,t_m)\in  \overline{B_{1/2}}\times\overline{B_{1/2}} \times \left[-{1/4},0\right]$. Then necessarily $x_m\neq y_m$ and we have
$$L\,|x_m-y_m|^\mu+K \left(|x_m|^2+(-t_m)^2 \right) \leq 2\|v\|_\infty.$$
If we choose $K=2^9\|v\|_\infty$ and $L \geq K$, we conclude
$$(x_m,y_m,t_m) \in B_{1/4} \times B_{1/4} \times (-1/16,0].$$
Recalling assumption {\bf (A2)}, we shall assume $L$ even larger and such that
\begin{equation}\label{osc_omega} 
	\omega(|x_m-y_m|) \leq  \min \left\{\frac{\lambda}{2}, \frac{4\lambda}{2+\delta} \frac{1-\mu}{3-\mu}\right\}.
\end{equation}

In addition, observe that  we also have
\begin{equation}\label{contrad}
	v(x_m,t_m)>v(y_m,t_m) \quad \mbox{and} \quad  v(x_m,t_m) > L|x_m-y_m|^\mu.
\end{equation}
	
We now apply Lemma \ref{IS_lemma} considering the previous remarks. Define the quantities
	\begin{equation}\nonumber 
		\begin{array}{rcl}
			A_x &:= & X + 2KI+\delta\,v(x_m,t_m)^{-1}(p+2Kx_m)\otimes (p+2Kx_m), \\[0.2cm]
			A_y &:= & Y+\delta\,v(y_m,t_m)^{-1} p \otimes p, \\[0.2cm]
			F[A_x] &:= & F(x_m,t_m,A_x), \\[0.2cm]
			F[A_y] &:= & F(y_m,t_m,A_y), \\[0.2cm]
			\Delta & := & |x_m-y_m|.
		\end{array}
	\end{equation}
	From \eqref{general_eq}, we have 
	\begin{equation}\nonumber
		\begin{array}{rcl}
			F[A_x] & \geq &  \tau + 2Kt_m +f(x_m,t_m)v(x_m,t_m)^{-1},
			\\[0.15cm]
			F[A_y] & \leq &  \tau +f(y_m,t_m)v(y_m,t_m)^{-1}
		\end{array}
	\end{equation}
	and, consequently,
	\begin{equation}\label{PDE}
		F[A_y]-F[A_x] \leq K+ f(y_m,t_m)v(y_m,t_m)^{-1} - f(x_m,t_m)v(x_m,t_m)^{-1}.
	\end{equation}
	
	On the other hand, by \eqref{continuityF},
	\begin{equation}\nonumber
		F[A_y]-F[A_x] = F(y_m,t_m,A_y) - F(x_m,t_m,A_y)  \geq  -\omega(\Delta)\|A_y\|.
	\end{equation}
	From this fact and \eqref{UP}, for each $\iota>0$, we can find $M_\iota \in  \mathcal{A}_{\lambda, \Lambda}$, such that
	\begin{equation}\label{operator}
		F[A_y]-F[A_x] \geq \T(M_\iota(A_y-A_x)) - \iota -\omega(\Delta)\|A_y\|.
	\end{equation}
We next estimate the first term on the right-hand side of \eqref{operator}.	
\begin{equation}\nonumber
		\begin{array}{c}
			\T(M_\iota(A_y-A_x)) \\[0.2cm]
			= -2K \T({M_\iota}) +\T(M_\iota(Y-X)) + \delta\,v(y_m,t_m)^{-1}\T(M_\iota (p \otimes p)) \\[0.2cm]
			- \delta\,v(x_m,t_m)^{-1}\T(M_\iota (p+2Kx_m)\otimes(p+2Kx_m)) \\[0.2cm]
			\geq -2K \Lambda n +\lambda \T(Y-X) + \delta\,v(y_m,t_m)^{-1}\T(M_\iota (p \otimes p)) \\[0.2cm]
			- \delta\,v(x_m,t_m)^{-1}\left(\T(M_\iota \, p\otimes p)+4K\T(M_\iota \, p\otimes x_m )+4K^2\T(M_\iota\, x_m \otimes x_m)\right) 
			\\[0.2cm]
			\geq -2K \Lambda n +\lambda \T(Y-X) + \delta\,v(y_m,t_m)^{-1}\T(M_\iota (p \otimes p)) \\[0.2cm]
			- \delta\,v(x_m,t_m)^{-1}\left(\T(M_\iota \, p\otimes p)+4K\Lambda n|p||x_m|+4K^2\Lambda|x_m|^2
			\right).
		\end{array}
	\end{equation}
Taking \eqref{PDE}, \eqref{operator} and the previous estimate into account, and noting that 
\begin{equation}\nonumber
		\|A_y\| \leq \|Y\|+\delta\,v(y_m,t_m)^{-1} |p|^2,
	\end{equation}
we conclude
\begin{eqnarray}\label{natal22}
&  K(1 + 2 \Lambda n)-\lambda\T(Y-X)  +\omega(\Delta)\|Y\| +\iota \nonumber \\[0.2cm]
& \geq  v(x_m,t_m)^{-1}\left(f(x_m,t_m)-\delta\T(M_\iota \, p\otimes p)-4\delta K\Lambda n|p||x_m|-4\delta K^2\Lambda|x_m|^2\right) \nonumber \\[0.2cm]
& -v(y_m,t_m)^{-1}\left(f(y_m,t_m)-\delta\,\T(M_\iota (p \otimes p))+\delta \omega(\Delta) |p|^2\right). 
\end{eqnarray}

From \eqref{IS_estim} and the choices made in \eqref{osc_omega}, for $L \gg 1$, we obtain
	\begin{equation}\nonumber
		\begin{array}{rcl}
			f(y_m,t_m)-\delta\,\T(M_\iota (p \otimes p))+\delta \omega(\Delta) |p|^2 & \leq & \|f\|_{\infty}-\delta(\lambda- \omega(\Delta))|p|^2 \\[0.2cm] & \leq & \|f\|_{\infty}-\dfrac{\lambda}{2} \delta\mu^2L^2 \\[0.25cm] 
			& < & 0.
		\end{array}
	\end{equation}
But, from \eqref{contrad}, 
$$-v(y_m,t_m)^{-1} < -v(x_m,t_m)^{-1}	$$
so, letting $\iota \to 0$ in \eqref{natal22}, we obtain  
	\begin{equation}\nonumber
		\begin{array}{c}
			K(1 + 2 \Lambda n)-\lambda\T(Y-X) +\omega(\Delta)\|Y\|
			\\[0.2cm]
			\geq  v(x_m,t_m)^{-1}\left(-2\|f\|_\infty-4\delta K\Lambda n|p|-4\delta K^2\Lambda -\delta \omega(\Delta) |p|^2\right).
		\end{array}
	\end{equation}
Using \eqref{IS_estim}, \eqref{IS_estim2} and \eqref{contrad}, we arrive at the estimate
	\begin{equation}\nonumber
		\begin{array}{c}
			K(1 + 2 \Lambda n)-8\mu\lambda \dfrac{1-\mu}{3-\mu}L\Delta^{\mu-2} + 2\mu L \omega(\Delta)\Delta^{\mu-2}
			\\[0.35cm]
			\geq  L^{-1}\Delta^{-\mu}\left(-2\|f\|_\infty-4\delta K\Lambda n\mu L\Delta^{\mu-1}-4\delta K^2\Lambda-\delta \mu^2\,\omega(\Delta)  L^2\Delta^{2\mu-2}\right).
		\end{array}
	\end{equation}
Now, we multiply both sides by $L^{-1}\Delta^{2-\mu}$, to get
	\begin{equation}\nonumber
		\begin{array}{c}
			\dfrac{K(1 + 2 \Lambda n)}{L}\Delta^{2-\mu}-8\mu\lambda \dfrac{1-\mu}{3-\mu} + 2\mu \omega(\Delta)
			\\[0.35cm]
			\geq  \dfrac{-2\|f\|_\infty-4\delta K^2\Lambda}{L^2}\Delta^{2(1-\mu)}
			-\dfrac{4\delta K\Lambda n}{L}\mu \Delta^{1-\mu}-\delta \mu^2\,\omega(\Delta).
		\end{array}
	\end{equation}
	Hence, since $\Delta \leq 1$ and $\mu \in (0,1)$, we arrive at
	\begin{equation}\nonumber
		\begin{array}{c}
			\dfrac{K(1 + 2 \Lambda n)}{L}-8\mu\lambda \dfrac{1-\mu}{3-\mu} + (2+\delta)\mu \,\omega(\Delta)
			\\[0.35cm]
			\geq  -\dfrac{2\|f\|_\infty+4\delta K^2\Lambda}{L^2}
			-\dfrac{4\delta K\Lambda n\mu}{L} .
		\end{array}
	\end{equation}
Finally, from \eqref{osc_omega}, we derive
$$-4\mu\lambda \dfrac{1-\mu}{3-\mu} \geq  -\dfrac{2\|f\|_\infty+2^{20}\delta \Lambda \|v\|_\infty^2+2^{11}\delta \|v\|_\infty\Lambda n\mu+2^9\|v\|_\infty(1 + 2 \Lambda n)}{L},$$
\textit{i.e.},
$$L \leq  (3-\mu)\dfrac{\|f\|_\infty+2^{19}\delta \Lambda \|v\|_\infty^2+2^{10}\delta \|v\|_\infty\Lambda n\mu+2^8\|v\|_\infty(1 + 2 \Lambda n)}{2\mu\lambda (1-\mu)}$$
and we select $L$ universally large to get the desired contradiction.

\end{proof}

\subsection{H\"older regularity estimates in time.}
We now derive local $C^{0,{\frac{1}{2}}^-}$ estimates in time for positive bounded viscosity solutions of \eqref{general_eq}. The strategy relies on combining Theorem \ref{thm_holder} with a control of the oscillation in time, obtained by constructing subsolutions and supersolutions of \eqref{general_eq} in nonsingular parabolic regions. 

\begin{lemma}\label{space_time}
Let $v$ be a nonnegative continuous function in $\overline{G_1}$ such that 
\begin{equation}\label{v>1}
\begin{array}{cccc}
\mathcal{M}_{\lambda, \Lambda}^+(D^2 v) + c_1
|\nabla v|^2 - \partial_t v & \geq & -M   & \mbox{in } \; \{v>1\} \cap G_1, \\[0.2cm]
\mathcal{M}_{\lambda, \Lambda}^-(D^2 v) + c_2
|\nabla v|^2 - \partial_t v & \leq & M & \mbox{in } \; \{v>1\} \cap G_1,
\end{array}
\end{equation}
in the viscosity sense, for nonnegative parameters $c_1$, $c_2$ and $M$. Assume that, for $L > 1$, 
\begin{equation}\label{osc_space}
\sup\limits_{x \in B_1}|v(x,t)-v(0,t)| \leq L,
\end{equation}
for each $t \in (-1/4,0]$. Then, for 
$$\kappa_0:= \dfrac{1}{4}\min\left\{1,\dfrac{2L}{M+4n\Lambda L+16c_1 L^2} \right\},$$
we have
\begin{equation}\label{osc_time}
	\sup\limits_{t \in (-\kappa_0,0]}|v(0,0)-v(0,t)| \leq 8L. 
\end{equation}
\end{lemma}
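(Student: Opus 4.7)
The plan is to argue by contradiction using parabolic comparison against an explicit paraboloidal barrier, with a continuity bootstrap to verify lateral domination. I would prove the direction $v(0,0)-v(0,t^*)\leq 8L$ for $t^*\in(-\kappa_0,0)$; the opposite direction is handled symmetrically using the second inequality in \eqref{v>1} and $\mathcal{M}^-_{\lambda,\Lambda}$ in place of $\mathcal{M}^+_{\lambda,\Lambda}$.

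Assuming $v(0,0)-v(0,t^*)>8L$, nonnegativity of $v$ and \eqref{osc_space} imply $v>7L>1$ on $\overline{B_1}\times\{0\}$, so by continuity there exists a maximal $s_0\in[t^*,0)$ with $v>1$ throughout $\overline{B_1}\times[s_0,0]$, and the first inequality of \eqref{v>1} is active there. On this cylinder I would introduce the barrier
\[
\phi(x,t):=v(0,s_0)+L+\beta(t-s_0)+2L|x|^2, \qquad \beta:=M+4n\Lambda L+16c_1L^2.
\]
Since $D^2\phi=4LI$, $\nabla\phi=4Lx$, and $\mathcal{M}^+_{\lambda,\Lambda}(4LI)=4n\Lambda L$, a short computation verifies that $\phi$ is a classical supersolution of the first equation in \eqref{v>1}. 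Initial domination $\phi(\cdot,s_0)\geq v(\cdot,s_0)$ on $\overline{B_1}$ is immediate from \eqref{osc_space}; on the lateral boundary $|x|=1$, where $\phi=v(0,s_0)+3L+\beta(t-s_0)$, \eqref{osc_space} reduces the required $\phi\geq v$ to the bootstrap bound $v(0,t)-v(0,s_0)\leq 2L+\beta(t-s_0)$ for $t\in[s_0,0]$.

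To close the bootstrap, I would let $\bar s$ be the supremum of those $s\in[s_0,0]$ for which the bound holds on $[s_0,s]$. If $\bar s<0$, parabolic comparison between the viscosity subsolution $v$ and the classical supersolution $\phi$ on $\overline{B_1}\times[s_0,\bar s]$ forces $v(0,\bar s)\leq v(0,s_0)+L+\beta(\bar s-s_0)$, strictly tighter than the bootstrap by the gap $L$; continuity of $v(0,\cdot)$ then extends the estimate past $\bar s$, contradicting maximality. Hence $\bar s=0$ and the explicit form of $\kappa_0$ gives $v(0,0)\leq v(0,s_0)+L+\beta|s_0|\leq v(0,s_0)+3L/2$. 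Finally, if $s_0=t^*$ this already contradicts $v(0,0)-v(0,t^*)>8L$; if $s_0>t^*$, maximality of $s_0$ yields $x_0\in\overline{B_1}$ with $v(x_0,s_0)=1$, so \eqref{osc_space} gives $v(0,s_0)\leq 1+L\leq 2L$, hence $v(0,0)\leq 7L/2$, incompatible with $v(0,0)>8L$.

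The hard part will be the bootstrap closure, where the $L$-gap afforded by comparison must outweigh the slack built into the ansatz; the coefficients $L$ and $2L$ appearing in $\phi$ are calibrated precisely to produce this gap, and the comparison principle invoked here is the Pucci-type comparison with quadratic gradient highlighted in the introduction of Section~\ref{reg_aux_sec}.
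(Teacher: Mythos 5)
Your proof is correct, and it uses the same core machinery as the paper's argument (paraboloidal barriers with the same coefficient profile, the quadratic-gradient Pucci comparison of Proposition~\ref{comparison}, and a continuity bootstrap to secure lateral domination), but the global organization is genuinely different. The paper first proves the two-sided bound $|v(0,\tau_2)-v(0,\tau_1)|\leq 2L$ on any sub-cylinder $B_1\times(\tau_1,\tau_2]$ contained in $\{v>1\}$, deploying the pair of barriers $h^{\pm}$ simultaneously, and then reaches a general $t$ by chaining: it identifies the first and last times the cylinder meets $\{v\leq 1\}$, and sums $2L+L+2+L+2L\leq 8L$ across the resulting five segments (Figure~\ref{fig1}). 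You instead argue by contradiction: the hypothesis $v(0,0)-v(0,t^*)>8L$, together with nonnegativity and \eqref{osc_space}, pushes $v$ above $1$ on the top slice; a single forward-in-time super-barrier launched from the earliest crossing time $s_0$ then yields $v(0,0)\leq v(0,s_0)+3L/2$ (the coefficient calibration $K=2L$, $\beta=M+4n\Lambda L+16c_1L^2$ and the definition of $\kappa_0$ are exactly as in the paper), and this clashes with either $s_0=t^*$ or with $v(0,s_0)\leq 1+L\leq 2L$. This buys a cleaner route — one barrier, one touching point, no chaining — at the cost of having to run the mirror-image argument with the $\mathcal{M}^-$, $c_2$ inequality for the opposite sign of the difference (which works, since the $c_2|\nabla\psi|^2$ term only helps there). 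One wording slip worth fixing: the $s_0$ you want is the \emph{infimum} of admissible initial times, i.e., the one making the cylinder $\overline{B_1}\times[s,0]$ largest, not a maximum; and at that $s_0$ one only has $v\geq1$ on $\overline{B_1}\times\{s_0\}$ with $v>1$ on the half-open cylinder $\overline{B_1}\times(s_0,0]$, which is precisely where the PDE is active and all the comparison needs.
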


The proof of Lemma \ref{space_time} relies on a comparison principle for second-order operators as in \eqref{v>1}. 

\begin{proposition} \label{comparison}
Let $v_1$ and $v_2$ be, respectively, a viscosity supersolution and a viscosity subsolution of 
\begin{equation}\label{comp}
\begin{array}{cccc}
\mathcal{M}_{\lambda, \Lambda}^+(D^2 v) + c
|\nabla v|^2 - \partial_t v + M & = & 0   & \mbox{in } \; \Omega_T, \end{array}
\end{equation}
with $c\geq 0$ and $M \in \mathbb{R}$. If $v_2 \leq v_1$ in $\partial_p \Omega_T$, then 
$$v_2 \leq v_1 \quad \mbox{in }\  \Omega_T.$$
The same holds for the equation
\begin{equation}\label{comp2}
\begin{array}{cccc}
\mathcal{M}_{\lambda, \Lambda}^-(D^2 v) + c
|\nabla v|^2 - \partial_t v + M & = & 0   & \mbox{in } \; \Omega_T. \end{array}
\end{equation}   
\end{proposition}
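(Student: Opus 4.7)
The plan is to argue by contradiction using the classical doubling-of-variables technique from viscosity solution theory. The crucial observation is that the quadratic first-order term $c|\nabla v|^2$, which is the real source of difficulty here, becomes harmless because the doubling produces the \emph{same} gradient vector in the super- and sub-jets, so the term cancels exactly upon subtraction. Assume, toward a contradiction, that $M_0 := \sup_{\overline{\Omega_T}}(v_2 - v_1) > 0$. Fix a small $\eta \in (0, M_0/(T+1))$ and, for large $\beta > 0$, consider the doubled functional
$$
\Psi_\beta(x,y,t) := v_2(x,t) - v_1(y,t) - \tfrac{\beta}{2}|x-y|^2 - \eta(t+T+1)
$$
on $\overline{\Omega}\times\overline{\Omega}\times[-T,0]$. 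The linear time-penalty turns $v_1$ into an effective \emph{strict} supersolution, while the $\beta$-penalty drives the maximum to the spatial diagonal. Since $v_2\leq v_1$ on $\partial_p\Omega_T$ and $\eta(t+T+1)\geq \eta>0$, we have $\Psi_\beta \leq -\eta < 0$ whenever $(x,t)$ or $(y,t)$ lies on $\partial_p\Omega_T$, whereas $\sup \Psi_\beta \geq M_0 - \eta(T+1) > 0$. By continuity of $v_1,v_2$ and compactness, $\Psi_\beta$ attains its supremum at some $(x_\beta, y_\beta, t_\beta)$ with $x_\beta, y_\beta \in \Omega$ and $t_\beta \in (-T, 0)$, and a standard coercivity argument yields $\beta|x_\beta - y_\beta|^2 \to 0$ as $\beta \to \infty$.

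Next, I invoke the parabolic Jensen--Ishii lemma (in the spirit of Lemma \ref{IS_lemma}, but with $|x-y|^\mu$ replaced by $|x-y|^2$ and without any $x$-penalty). Setting $p := \beta(x_\beta - y_\beta)$, this produces $\tau_1, \tau_2 \in \R$ with $\tau_1 - \tau_2 = \eta$ (the $t$-derivative of the penalty) and matrices $X, Y \in \M(n)$ with $X \leq Y$, such that
$$
(\tau_1, p, X) \in \overline{\mathcal{P}}^+ v_2(x_\beta, t_\beta), \qquad (\tau_2, p, Y) \in \overline{\mathcal{P}}^- v_1(y_\beta, t_\beta).
$$
Feeding these jets into \eqref{comp} via the sub- and supersolution definitions produces
$$
\mathcal{M}^+_{\lambda,\Lambda}(X) + c|p|^2 - \tau_1 + M \geq 0 \quad \text{and} \quad \mathcal{M}^+_{\lambda,\Lambda}(Y) + c|p|^2 - \tau_2 + M \leq 0.
$$
Subtracting, the $c|p|^2$ terms cancel (this is the pivotal point) and we obtain
$$
\mathcal{M}^+_{\lambda,\Lambda}(X) - \mathcal{M}^+_{\lambda,\Lambda}(Y) \geq \tau_1 - \tau_2 = \eta > 0.
$$
However, the representation $\mathcal{M}^+_{\lambda,\Lambda}(M) = \sup_{A\in\mathcal{A}_{\lambda,\Lambda}} \T(AM)$ makes $\mathcal{M}^+_{\lambda,\Lambda}$ monotone in the matrix order, so $X \leq Y$ forces $\mathcal{M}^+_{\lambda,\Lambda}(X) \leq \mathcal{M}^+_{\lambda,\Lambda}(Y)$, a contradiction. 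The argument for \eqref{comp2} is identical, using the parallel monotonicity of $\mathcal{M}^-_{\lambda,\Lambda}$.

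The principal obstacle to navigate is precisely the quadratic dependence on $\nabla v$: unlike the Lipschitz-in-gradient setting, one cannot close a Gronwall-type estimate for a gradient mismatch. The doubling-of-variables method sidesteps this cleanly by producing matching gradients in both jets, so the nonlinear first-order term is eliminated from the comparison for free. The only bookkeeping needed is then the time penalty $\eta(t+T+1)$, whose dual purpose is to make the strict inequality $\tau_1 - \tau_2 = \eta > 0$ available and to push the doubled maximum into the parabolic interior, away from $\partial_p\Omega_T$ where the data ordering is prescribed.
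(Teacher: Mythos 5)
Your argument is correct and follows the same route as the paper: doubling of variables in space with a quadratic penalty, a linear-in-$t$ auxiliary penalty to produce a strict gap in the time components of the jets, the parabolic Jensen–Ishii lemma, and the decisive observation that the identical gradient entry $p$ in both jets makes the quadratic terms $c|p|^2$ cancel, so that only the monotonicity of $\mathcal{M}^{\pm}_{\lambda,\Lambda}$ is needed for the contradiction. The paper uses the singular time penalty $\eta/t$ instead of your $\eta(t+T+1)$, which is an inessential variant, and localizes the max via the quantitative lemmas of Imbert–Silvestre, but the mechanism and the conclusion are the same.
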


\begin{proof}
We adapt arguments in \cite{IS} and argue by contradiction. Assume that, for $\eta>0$ arbitrary,
$$
M_\eta := \sup\limits_{(x,t) \in \Omega_T} \left\{v_2(x,t)-v_1(x,t) + \frac{\eta}{t}\right\}>0.
$$
Note that then, since $t<0$, $M_\eta$ is attained in $\Omega_T$. 

For each $\epsilon>0$, consider
$$
M_\eta^\epsilon := \sup\limits_{x,y \in \Omega,\, t \in (-T,0)} \left\{v_2(x,t)-v_1(y,t)-\frac{|x-y|^2}{2\epsilon} + \frac{\eta}{t}\right\} \geq M_\eta >0,
$$
and denote by $(x_\epsilon,y_\epsilon,t_\epsilon)$ a maximizer. From \cite[Lemma 2.3.19]{IS}, and taking \cite[Remark 2.3.20]{IS} into account, we conclude 
$$
\frac{|x_\epsilon-y_\epsilon|^2}{\epsilon} \to 0, \quad \mbox{as} \quad \epsilon \to 0.
$$
Consequently, $M_\eta^\epsilon \to M_\eta$, as $\epsilon \to 0$. In addition, for $\epsilon \ll 1$, $(x_\epsilon,y_\epsilon,t_\epsilon)$ is an interior point of $\Omega \times \Omega \times (-T,0)$. Now, we apply \cite[Lemma 2.3.23]{IS} for $v_1$ and $v_2+\eta/t$, obtaining the existence of $\tau \in \mathbb{R}$ and $X,Y \in \M(n)$, satisfying $X \leq Y$,
$$
(\tau+\frac{\eta}{t^2},p_\epsilon,X) \in \overline{\mathcal{P}}^+ v_2(t_\epsilon,x_\epsilon) \quad \mbox{and} \quad (\tau,p_\epsilon,Y) \in \overline{\mathcal{P}}^- v_1(t_\epsilon,y_\epsilon),
$$
with $p_\epsilon=\frac{x_\epsilon-y_\epsilon}{\epsilon}$. From \eqref{comp}, we write the following inequalities
$$
\mathcal{M}_{\lambda, \Lambda}^+(Y) + c
|p_\epsilon|^2 - \tau + M \leq 0 \leq \mathcal{M}_{\lambda, \Lambda}^+(X) + c
|p_\epsilon|^2 - \left(\tau+\frac{\eta}{t^2}\right) + M
$$
and so,
$$
\mathcal{M}_{\lambda, \Lambda}^+(X) \leq \mathcal{M}_{\lambda, \Lambda}^+(Y)  \leq  \mathcal{M}_{\lambda, \Lambda}^+(X) - \frac{\eta}{t^2},
$$
which is a contradiction. Thus $v_2(x,t) \leq v_1(x,t) -\frac{\eta}{t}$ in $\Omega_T$ and, since $\eta>0$ is arbitrary, the result follows. 

Similarly, we obtain the result for equation \eqref{comp2}.

\end{proof}  

We are now ready to prove Lemma \ref{space_time}.

\begin{proof}[Proof of Lemma \ref{space_time}]
First, we claim that if
$$
G_{\tau_1,\tau_2}:= B_1 \times (\tau_1,\tau_2] \subset \{v>1\},
$$
for $(\tau_1,\tau_2] \subset (-\kappa_0,0]$, then 
\begin{equation}\label{claim}
|v(0,\tau_2)- v(0,\tau_1)| \leq 2L.
\end{equation}
Indeed, consider functions
$$
h^\pm(x,t):= v(0,\tau_1)\pm L \pm K |x|^2 \pm \overline{K}(t-\tau_1),
$$
for positive parameters $K$ and $\overline{K}$ to be chosen later. Observe that
\begin{equation}\label{pucci}
\begin{array}{rcl}
\mathcal{M}_{\lambda, \Lambda}^+(D^2h^+) +c_1|\nabla h^+|^2- \partial_t h^+ & \leq & 2n\Lambda K + 4c_1K^2 - \overline{K} \\[0.2cm]
-\mathcal{M}_{\lambda, \Lambda}^-(D^2h^-) -c_2|\nabla h^-|^2+ \partial_t h^- & \leq & 2n\Lambda K - \overline{K}.
\end{array}
\end{equation}
Hence, choosing 
$$
\overline{K} = 2n\Lambda K +4c_1K^2+M > 2n\Lambda K +M,
$$
we obtain
$$\mathcal{M}_{\lambda, \Lambda}^+(D^2 v) + c_1
|\nabla v|^2 - \partial_t v  \geq -M \geq  \mathcal{M}_{\lambda, \Lambda}^+(D^2h^+) +c_1|\nabla h^+|^2- \partial_t h^+  $$
$$\mathcal{M}_{\lambda, \Lambda}^-(D^2 v) + c_2
|\nabla v|^2 - \partial_t v  \leq \,M  \leq  \mathcal{M}_{\lambda, \Lambda}^-(D^2h^-) +c_2|\nabla h^-|^2 - \partial_t h^-.$$
In the sequel, we define 
$$
t^\star:= \sup\limits_{\tau_1 \leq s \leq \tau_2} \left\{ s \; : \; |v(0,t)-v(0,\tau_1)| \leq 2L, \; \forall\, \tau_1 \leq t \leq s \right\}.
$$
Observe that from the definition above and \eqref{osc_space}, we have
$$
h^- \leq v \leq h^+ \quad \mbox{on } \; \partial_p G_{\tau_1,t^\star}.
$$
To check this, we notice that in $B_1$, we have
\begin{equation}
h^-(x,\tau_1) \leq |v(0,\tau_1)-v(x,\tau_1)|-L+v(x,\tau_1) \leq v(x,\tau_1).
\end{equation}
For $(x,t) \in \partial B_1 \times (\tau_1,t^\star)$, we select $K=2L$, thus obtaining 
\begin{equation}\nonumber
\begin{array}{rcl}
h^-(x,t) & \leq & |v(0,\tau_1)-v(0,t)|+|v(0,t)-v(x,t)|-L-K + v(x,t)\\[0.2cm]
 & \leq & 2L-K + v(x,t) \\[0.2cm]
 & = & v(x,t).
\end{array}
\end{equation}
Under these choices, we also show that $h^+ \geq v$ on $\partial_p G_{\tau_1,t^\star}$. Consequently, from Proposition \ref{comparison}, we get
$$
h^- \leq v \leq h^+ \quad \mbox{in } \; G_{\tau_1,t^\star}.
$$
In particular, 
$$
-L-\overline{K}(t^\star-\tau_1)+v(0,\tau_1) \leq v(0,t^\star) \leq v(0,\tau_1)+L+\overline{K}(t^\star-\tau_1) 
$$
and so, taking into account that $t^\star-\tau_1 \leq \tau_2-\tau_1 \leq \kappa_0$, we conclude 
$$
|v(0,t^\star)-v(0,\tau_1)| \leq L+ \overline{K}(t^\star-\tau_1) \leq L+ \overline{K} \kappa_0.
$$
Since  
$$
\kappa_0 \leq \dfrac{L}{2(M+4n\Lambda L+16c_1 L^2)},
$$
we get 
$$|v(0,t^\star)-v(0,\tau_1)|\leq 3L/2<2L.$$
If $t^\star<\tau_2$, the estimate above contradicts the maximality of $t^\star$. Hence, we conclude that $t^\star=\tau_2$, as claimed.

Now, we argue as follows. Fix $t \in (-\kappa_0,0]$ arbitrary. If $G_{t,0} \subseteq \{v>1\}$, we have, by the previous analysis, 
$$|v(0,0)-v(0,t)| \leq 2L.$$
If $G_{t,0} \not\subseteq \{v >1\}$, then we can find (see Fig. \ref{fig1})
$$(x_i,t_i) \in \overline{G_{t,0}}, \quad i=1,2 ,$$
with $t_1 \leq t_2$, such that $v (x_i,t_i) \leq 1$ and
$$ \overline{B_1} \times (t,t_1) \cup \overline{B_1} \times (t_2,0) \subseteq \{v>1\}.$$

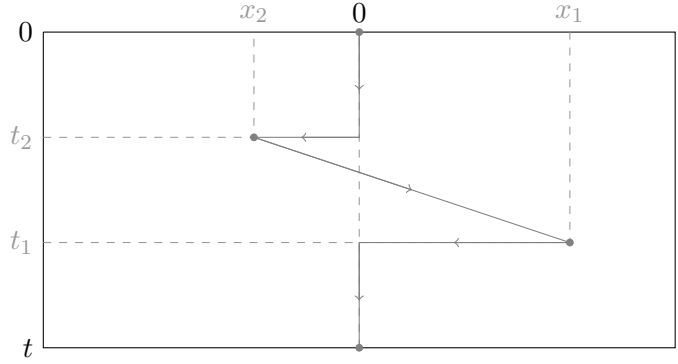
\begin{figure}[h!]
\centering
\begin{tikzpicture}[scale=1.4]
%quadrado
\draw (-3,3)node[left] {$0$} -- (-3,0) node[left] {$t$} -- (3,0) -- (3,3) -- (-3,3);
%coordenadas
\draw[gray!80, dashed] (-3,2) node[left] {$t_2$} -- (-1,2) -- (-1,3) node[above] {$x_2$};
\draw[gray!80, dashed] (-3,1) node[left] {$t_1$} -- (2,1) -- (2,3) node[above] {$x_1$};
\draw[gray!80, dashed] (0,0) -- (0,3);
%pontos
\filldraw [gray] (0,3) circle (0.9pt) node[above, black] {$0$};
\filldraw [gray] (-1,2) circle (0.9pt);
\filldraw [gray] (2,1) circle (0.9pt);
\filldraw [gray] (0,0) circle (0.9pt);
%caminho2
\draw[gray] (0,3) -- (0,2) -- (-1,2) -- (2,1) -- (0,1) -- (0,0);
\draw[->,gray] (0,3) -- (0,2.45);
\draw[->,gray] (0,2) -- (-0.55,2);
\draw[->,gray] (-1,2) -- (0.5,1.5);
\draw[->,gray] (2,1) -- (0.9,1);
\draw[->,gray] (0,1) -- (0,0.45);
\end{tikzpicture}
\vspace*{-0.4cm}
\caption{Oscillation control in time.}
\label{fig1}
\end{figure}

\bigskip

\noindent Then, from \eqref{osc_space} and \eqref{claim}, we have
\begin{eqnarray*}
|v(0,0)-v(0,t)| & \leq & |v(0,0)-v(0,t_2)|+|v(0,t_2)-v(x_2,t_2)|\\
& & +|v(x_2,t_2) - v(x_1,t_1)| + |v(x_1,t_1) - v(0,t_1)|\\
& & +|v(0,t_1)-v(0,t)|\\
& \leq & 2L + L + 2 + L + 2L \\
& \leq & 8L.
\end{eqnarray*}

\end{proof}

Note that, in the region $\{v>1\}$, viscosity solutions of equation \eqref{general_eq} solve \eqref{v>1},
for parameters $c_1=\delta \Lambda$, $c_2=\delta \lambda\|v\|_\infty^{-1}$ and $M=\|f\|_\infty$. Considering this and Theorem \ref{thm_holder}, we apply Lemma \ref{space_time} to obtain the next result.

\begin{theorem}\label{mu_regularity_time}
Let $v$ be a positive bounded viscosity solution of \eqref{general_eq} in $G_1$. 
For every $\mu \in (0,1)$, there exists a constant $C^\ast$, depending only on $\mu$, $\delta$, $\lambda$, $\Lambda$, $n$, $\|f\|_\infty$ and $\|v\|_\infty$, such that
\begin{equation}\nonumber
	\sup\limits_{t,s \, \in (-\kappa_0/2,0]} \frac{|v(x,t)-v(x,s)|}{|t-s|^\frac{\mu}{2}} \leq C^\ast,
\end{equation}
for  any $ \, x \in B_{1/2}$, where 
$$\kappa_0:= \dfrac{1}{4}\min\left\{1,\dfrac{8C}{\|f\|_\infty +16n\Lambda C+64\delta \Lambda C^2} \right\}$$
and $C$ is from Theorem \ref{thm_holder}.
\end{theorem}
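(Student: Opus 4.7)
The strategy is to apply Lemma \ref{space_time} to a suitable parabolic rescaling of $v$. Fix $x \in B_{1/2}$ and $t, s \in (-\kappa_0/2, 0]$ with $h := t - s > 0$. For $\rho > 0$ to be chosen, consider
\[
\tilde v(y,\tau) := v(x+\rho y,\, t + \rho^2 \tau), \quad (y,\tau) \in G_1.
\]
The 1-homogeneity of the Pucci operators transforms the inequalities in \eqref{v>1} that $v$ satisfies in $\{v>1\}$ (with $c_1 = \delta \Lambda$, $c_2 = \delta \lambda/\|v\|_\infty$, and $M = \|f\|_\infty$, as noted just before the theorem) into the corresponding Pucci inequalities for $\tilde v$ in $\{\tilde v > 1\} \cap G_1$, with the same $c_1$, $c_2$ and with $M$ replaced by $M_\rho := \|f\|_\infty \rho^2 \leq \|f\|_\infty$.

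\textbf{Key application.} Theorem \ref{thm_holder} applied to $v$ at scale $\rho$ shows that the spatial oscillation of $\tilde v$ in $B_1$ at any fixed $\tau$ is bounded by $C \rho^\mu$, where $C$ is the H\"older constant from that theorem. With $L := 4C$ (enlarging $C$ if necessary so that $L > 1$), we have $L \geq C \rho^\mu$ for $\rho \leq 1$, so Lemma \ref{space_time} applies to $\tilde v$, with the constants yielding precisely the $\kappa_0$ stated in the theorem (up to routine algebra matching the two expressions). The conclusion reads $|\tilde v(0,0) - \tilde v(0,\tau)| \leq 8 L$ for $\tau \in (-\kappa_0, 0]$. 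Choosing $\rho := \sqrt{h/\kappa_0}$ places $s = t-h$ precisely at $\tau = -\kappa_0$ in the rescaled coordinates, and translating back yields an oscillation estimate for $v$ that must now be sharpened to exhibit the correct $\rho^\mu$ scaling.

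\textbf{Main obstacle.} The delicate point is turning the Lemma's crude bound involving $L = 4C$ (constant in $\rho$) into the H\"older estimate $\lesssim C \rho^\mu = C (h/\kappa_0)^{\mu/2}$. Inspecting the proof of Lemma \ref{space_time}, in the "easy case" where the relevant rescaled cylinder stays inside $\{\tilde v > 1\}$, the barrier argument actually yields a term of order the \emph{actual} spatial oscillation $L_{\rm osc} := C \rho^\mu$; in the "bad case", where $\tilde v$ drops below the threshold $1$, an additive constant coming from the estimate $|\tilde v(x_1,t_1) - \tilde v(x_2,t_2)| \leq 2$ appears. The key technical step is to show that this additive contribution is itself $O(\rho^\mu)$—for instance by ruling the bad case out when $v(x,t)$ is well inside $\{v > 1\}$ (the spatial H\"older then propagates $v > 1$ throughout the small cylinder), and otherwise exploiting the smallness of $M_\rho = \|f\|_\infty \rho^2$ to absorb it. Combining the regimes gives $|v(x,t) - v(x,s)| \leq C^\ast h^{\mu/2}$ with $C^\ast$ depending only on $\mu, \delta, \lambda, \Lambda, n, \|f\|_\infty, \|v\|_\infty$, as claimed.
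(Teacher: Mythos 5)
There is a genuine gap, and it is precisely the one you flagged as the ``Main obstacle'' but did not resolve. The paper's proof \emph{normalizes} the rescaling: it sets
\[
w(x,t) := \frac{v(\kappa x,\kappa^2 t)}{\kappa^\mu},
\]
not $\tilde v(y,\tau)=v(x+\rho y,\,t+\rho^2\tau)$. Because of the $\kappa^{-\mu}$ factor, Theorem \ref{thm_holder} gives a spatial oscillation bound for $w$ that is the \emph{universal} constant $C$ (independent of $\kappa$), so Lemma \ref{space_time} applies with $L=C$ and a $\kappa$-independent $\kappa_0$, and yields $|w(0,0)-w(0,\tau)|\le 8C$ for $\tau\in(-\kappa_0,0]$. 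Unwinding the normalization multiplies this by $\kappa^\mu$, so one gets $|v(0,0)-v(0,s)|\le 8C\kappa^\mu$ for $s\in(-\kappa^2\kappa_0,0]$, which is exactly the $|t-s|^{\mu/2}$ estimate after setting $r=\kappa\sqrt{\kappa_0}$. Crucially, the additive ``$+2$'' coming from the threshold-crossing in the bad case of Lemma \ref{space_time}'s proof lives on the $w$-scale, so after unwinding it contributes $2\kappa^\mu$ — it scales correctly with no extra work.

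In your unnormalized setup, Lemma \ref{space_time} applied with $L=4C$ only yields $|\tilde v(0,0)-\tilde v(0,\tau)|\le 32C$, a $\rho$-independent constant, which is useless. Your proposed fix does not close the gap. You cannot instead take $L=L_{\rm osc}=C\rho^\mu$, because the lemma requires $L>1$ and, more importantly, $\kappa_0$ would then degenerate as $O(\rho^\mu)$, shrinking the usable time window to order $\rho^{2+\mu}$ and destroying the scaling. And the two mitigations you sketch for the bad case do not go through: the free boundary $\{\tilde v=1\}$ can sit anywhere in the cylinder (nothing in the hypotheses keeps $v(x,t)$ ``well inside'' $\{v>1\}$), and the term $|\tilde v(x_2,t_2)-\tilde v(x_1,t_1)|\le 2$ in the bad case is a pure oscillation bound across the sub-level set — it has nothing to do with the size of $M_\rho=\|f\|_\infty\rho^2$, so smallness of $M_\rho$ cannot absorb it. The clean way out is the paper's: normalize by $\kappa^\mu$ so that the entire argument, including the threshold-crossing contribution, automatically carries the $\kappa^\mu$ factor when you scale back.

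A secondary point: with the normalization, the Pucci inequalities for $w$ must be checked on $\{w>1\}$, which corresponds to $\{v>\kappa^\mu\}$, a \emph{larger} set than $\{v>1\}$. Redoing the derivation on $\{v>\kappa^\mu\}$ gives coefficients that rescale to $b_i=c_i$ and $M_w=\kappa^{2-2\mu}\|f\|_\infty\le\|f\|_\infty$, so the lemma still applies with the same $c_i,M$; your observation that $M_\rho\le\|f\|_\infty$ is of the same spirit, but in the unnormalized picture you have not verified the Pucci inequalities on the correct level set either.
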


\begin{proof}
Given $0<\mu<1$ and $0<\kappa \leq 1/2$, we define, in $G_{1}$,
$$
w(x,t):= \frac{v\left(\kappa x, \kappa^2 t\right)}{\kappa^\mu}.
$$
Applying Theorem \ref{thm_holder} with $y=0$, we obtain
$$|v(x,t)-v(0,t)| \leq C |x|^\mu, \quad \forall \, x \in B_{1/2}, \quad \forall \,  t \in (-1/4, 0]. $$
Thus, since $0<\kappa \leq 1/2$,
$$\sup\limits_{x \in B_\kappa} |v(x,\kappa^2 t)-v(0,\kappa^2 t)| \leq C \kappa^\mu, \quad \forall \,  t \in (-1/4, 0]$$
and we conclude
$$
\sup\limits_{x \in B_1}|w(x,t)-w(0,t)| \leq C,
$$
uniformly in $-1/4 < t \leq 0$.

Now, observe that in the region $\{w>1\} \cap G_{1/2}$, we have
\begin{equation}\nonumber
\begin{array}{ccc}
\mathcal{M}_{\lambda, \Lambda}^+(D^2 w) + b_1
|\nabla w|^2 - \partial_t w & \geq & -M, \\[0.2cm]
\mathcal{M}_{\lambda, \Lambda}^-(D^2 w) + b_2
|\nabla w|^2 - \partial_t w & \leq & M,
\end{array}
\end{equation}
where $b_i=\kappa^\mu c_i \leq c_i$, for $i=1,2$.  Therefore, from Lemma \ref{space_time}, we obtain 
$$
\sup\limits_{t \in (-\kappa_0,0]}|w(0,t)-w(0,0)| \leq 8C,
$$
which implies
$$
\sup\limits_{s \in (-r^2,0]}|v(0,s)-v(0,0)| \leq \frac{8C}{\sqrt{\kappa_0^\mu}} \, r^\mu,
$$
for each $0 \leq r \leq \frac{\sqrt{\kappa_0}}{2}$.

By a standard scaling and translation procedure, we obtain the result with
$$C^\ast =  8C \left( \frac{2}{\sqrt{\kappa_0}} \right)^\mu.$$

\end{proof}

\section{Improved regularity estimates}\label{loc_reg_sec}

In this section, our primary goal is to obtain pointwise oscillation estimates in space and time for positive viscosity solutions of \eqref{Ee}. We drop the $\varepsilon$ in $u_\varepsilon$  for simplicity of notation. The core of the argument is to derive  regularity estimates for  
\begin{equation}\label{ugamma}
	u_\gamma(x,t):= u(x,t)^{\frac{2-\gamma}{2}},
\end{equation}
which are obtained by accessing PDE information for $u_\gamma$, using the fact that $u$ solves \eqref{Ee}. 

Denoting $v:=u_\gamma$, and recalling \eqref{alpha}, we compute
\begin{equation}\label{Dv}
	\partial_t v = \frac{1}{1+\alpha}u^{-\frac{\gamma}{2}}\, \partial_t u, \quad \quad \nabla v = \frac{1}{1+\alpha}u^{-\frac{\gamma}{2}}\nabla u
\end{equation}
and
\begin{equation}\label{D2v}
	D^2v = \frac{1}{1+\alpha}\left( \frac{1}{1+\alpha}-1\right)u^{-\frac{\gamma}{2}-1}\nabla u \otimes \nabla u + \frac{1}{1+\alpha}u^{-\frac{\gamma}{2}} D^2u.
\end{equation}
Now, plugging \eqref{Dv} into \eqref{D2v} yields
\begin{equation}\label{D2v2}
	D^2v + \alpha\, v^{-1} \nabla v \otimes \nabla v = \frac{1}{1+\alpha}v^{-1} \left(u^{1-\gamma} D^2u \right).  
\end{equation}
From the PDE satisfied by $u$ and assumption {\bf (A3)}, we derive 
\begin{equation}\nonumber
	\begin{array}{rcl}
		F \left(x,t,D^2v + \alpha\,v^{-1} \nabla v \otimes \nabla v \right) & = & \displaystyle \frac{1}{1+\alpha}v^{-1} \left(u^{1-\gamma} F(x,t,D^2u) \right) \\[0.4cm]
		& = & \displaystyle \frac{1}{1+\alpha}v^{-1}(\mathcal{B}_\varepsilon(u)+u^{1-\gamma} \partial_t u) \\[0.4cm]
		& = & \displaystyle \frac{1}{1+\alpha}v^{-1}\mathcal{B}_\varepsilon(v^{1+\alpha})+\partial_t v.
	\end{array} 
\end{equation}
Therefore, we conclude that $v$ satisfies \eqref{general_eq}, that is,
$$F\left(x,t, D^2 v + \delta\,v^{-1} \nabla v \otimes \nabla v \right) - \partial_t v=f(x,t)\,v^{-1} \quad \mbox{in } \; \Omega_T,$$
in the viscosity sense, for some bounded function $f$, with $\|f\|_\infty \leq 1$, and $\delta = \alpha$. Although the computations above have been conducted formally, it is standard to justify that $v$ satisfies the PDE in the viscosity sense. 

Considering these facts, we obtain the following result from Theorems \ref{thm_holder} and \ref{mu_regularity_time}.

\begin{corollary}\label{undou}
Let $u$ be a positive bounded viscosity solution of \eqref{Ee}. For every $\mu \in (0,1)$, there exists a universal constant $C>0$, depending on $\mu$, $\gamma$, $\lambda$, $\Lambda$, $n$ and  $\|u\|_{L^\infty(G_1)}$, but not depending on $\varepsilon$, such that
\begin{equation}
\sup\limits_{(x,t) \in G_r}u(x,t) \leq \left( Cr^\mu+u(0,0)^{\frac{1}{1+\alpha}}  \right)^{1+\alpha}, 
\label{asym_growth}
\end{equation}
for each $0< r \leq \frac{\sqrt{\kappa_0}}{2}$, with $\kappa_0$ as in Theorem \ref{mu_regularity_time}.\end{corollary}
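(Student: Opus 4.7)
\textbf{Proof proposal for Corollary \ref{undou}.} The plan is to transfer regularity from $u$ to the reshaped function $v := u^{(2-\gamma)/2} = u^{1/(1+\alpha)}$, apply the Hölder estimates of Section \ref{reg_aux_sec}, and then undo the power transformation. The preceding computation in Section \ref{loc_reg_sec} already certifies that $v$ is a positive bounded viscosity solution of \eqref{general_eq} in $G_1$ with $\delta = \alpha$ and right-hand side $f$ satisfying $\|f\|_\infty \leq 1$. Moreover, by Remark \ref{ABP}, $\|v\|_{L^\infty(G_1)} = \|u\|_{L^\infty(G_1)}^{1/(1+\alpha)}$ is controlled independently of $\varepsilon$, so every constant we obtain along the way is universal in the required sense.

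Fix $\mu \in (0,1)$. First I would apply Theorem \ref{thm_holder} to $v$, centred at $x = 0$, to get
\begin{equation*}
|v(x,t) - v(0,t)| \leq C_1 |x|^\mu, \qquad (x,t) \in B_{1/2} \times (-1/4, 0],
\end{equation*}
and then Theorem \ref{mu_regularity_time} at the space point $x = 0$ to get
\begin{equation*}
|v(0,t) - v(0,0)| \leq C_2 |t|^{\mu/2}, \qquad t \in (-\kappa_0/2, 0].
\end{equation*}
The triangle inequality then yields, for any $(x,t) \in G_r$ with $0 < r \leq \sqrt{\kappa_0}/2$ (so $|t| \leq r^2 \leq \kappa_0/4$),
\begin{equation*}
|v(x,t) - v(0,0)| \leq C_1 r^\mu + C_2 (r^2)^{\mu/2} = (C_1 + C_2)\, r^\mu =: C_0\, r^\mu.
\end{equation*}
Since $v \geq 0$, we obtain $v(x,t) \leq v(0,0) + C_0 r^\mu = u(0,0)^{1/(1+\alpha)} + C_0 r^\mu$.

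Finally, raising to the power $1+\alpha > 0$ (which is monotone on $[0,\infty)$) and using $u = v^{1+\alpha}$ gives
\begin{equation*}
\sup_{(x,t) \in G_r} u(x,t) \leq \left( C_0\, r^\mu + u(0,0)^{1/(1+\alpha)} \right)^{1+\alpha},
\end{equation*}
which is \eqref{asym_growth} with $C = C_0$.

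The only subtlety I anticipate is bookkeeping the dependency of the constants: one must check that $C_1$ and $C_2$ from Theorems \ref{thm_holder} and \ref{mu_regularity_time} depend only on $\mu$, $\delta = \alpha$ (itself a function of $\gamma$), the ellipticity constants $\lambda, \Lambda$, the dimension $n$, the modulus $\omega$, $\|f\|_\infty \leq 1$, and $\|v\|_\infty$, so that the final constant $C$ inherits the advertised dependencies and, crucially, no hidden dependence on $\varepsilon$. The structural work (viscosity justification of the transformation, the singular Jensen--Ishii argument, and the subsolution/supersolution comparison that underlies the time oscillation) has already been done in the earlier sections, so the corollary reduces to assembling those estimates correctly in the intrinsic parabolic geometry.
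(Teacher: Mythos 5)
Your proof is correct and follows essentially the same route as the paper's: both pass to $v = u^{(2-\gamma)/2} = u^{1/(1+\alpha)}$, apply the spatial estimate of Theorem~\ref{thm_holder} and then the temporal estimate of Theorem~\ref{mu_regularity_time} (chained through the central space point), and finally raise to the power $1+\alpha$. The only cosmetic difference is that the paper writes the two one-sided bounds and adds them, whereas you apply the triangle inequality to $|v(x,t)-v(0,0)|$ before exponentiating; the constants and dependencies work out identically.
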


\begin{proof}
First, from Theorem \ref{thm_holder}, we obtain 
$$\sup\limits_{x\in B_r}  \left| u(x,t)^{\frac{1}{1+\alpha}} - u(0,t)^{\frac{1}{1+\alpha}} \right| \leq C^\prime r^\mu $$
$$\Longrightarrow \sup\limits_{x\in B_r} u(x,t)^{\frac{1}{1+\alpha}}  \leq C^\prime r^\mu + u(0,t)^{\frac{1}{1+\alpha}},\qquad $$
for every $r\leq 1/2$ and $t \in (-1/4, 0]$.

Then, from Theorem \ref{mu_regularity_time},
$$\sup\limits_{t \in (-r^2,0]}  \left| u(0,t)^{\frac{1}{1+\alpha}} - u(0,0)^{\frac{1}{1+\alpha}} \right| \leq C^{\prime \prime} r^\mu$$
$$\Longrightarrow\sup\limits_{t \in (-r^2,0]}  u(0,t)^{\frac{1}{1+\alpha}}  \leq C^{\prime \prime}r^\mu+u(0,0)^{\frac{1}{1+\alpha}},\qquad $$
for every $r\leq \frac{\sqrt{\kappa_0}}{2}$.

Combining the two, we obtain the result.

\end{proof}

We conclude this section by deriving gradient estimates for positive viscosity solutions of \eqref{Ee}. First, we provide uniform local $C^{Lip,\frac{1}{2}}-$regularity estimates.

\begin{theorem}\label{Holderreg}
Let $u$ be a positive bounded viscosity solution of \eqref{Ee}.	There exists a universal constant $C>0$, depending on $\gamma$, $\lambda$, $\Lambda$, $n$ and  $\|u\|_{L^\infty(G_1)}$, but not depending on $\varepsilon$, such that
\begin{equation}\label{lipschitzest}
\sup\limits_{(x,t) \in G_{r}} |u(x,t)-u(0,0)| \leq Cr,
\end{equation}
for each $0< r \leq \frac{\sqrt{\kappa_0}}{2}$.
\end{theorem}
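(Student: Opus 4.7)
My plan is a dichotomy on the size of $M := u(0,0)$ relative to $r$, throughout using the critical exponent $\mu := 1/(1+\alpha) = (2-\gamma)/2$. The key point of this choice is that $\mu(1+\alpha)=1$, so the upper bound from Corollary~\ref{undou} collapses to a linear function of $r$ whenever $M^\mu \lesssim r^\mu$. Fix a large universal constant $K$ (its size, depending only on $\gamma$, $\lambda$, $\Lambda$, $n$ and $\|u\|_{L^\infty(G_1)}$, is specified below) and split into two regimes.

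In the ``flat'' regime $M \leq 2Kr$, Corollary~\ref{undou} applied with the exponent $\mu$ gives
$$\sup_{G_r} u \leq \bigl(C\,r^\mu + M^\mu\bigr)^{1+\alpha} \leq \bigl(C+(2K)^\mu\bigr)^{1+\alpha}\,r,$$
and therefore $|u(x,t)-u(0,0)| \leq \sup_{G_r} u + M \leq C'\,r$ on $G_r$. In the ``non-degenerate'' regime $M > 2Kr$, I set $\rho := M/K$, which satisfies $2r < \rho \leq \|u\|_\infty/K$; enlarging $K$ if necessary ensures $\rho \leq \sqrt{\kappa_0}/2$, so the spatial and temporal H\"older control on $v := u^\mu$ from Theorems~\ref{thm_holder}--\ref{mu_regularity_time} delivers $|v-v(0,0)| \leq C\rho^\mu$ on $G_\rho$. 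The further choice $K^\mu \geq 2C$ then traps $v \in [v(0,0)/2,\,(3/2)v(0,0)]$, equivalently $u \in [M/2^{1+\alpha},\,(3/2)^{1+\alpha}M]$, on all of $G_\rho$---so $u$ stays uniformly away from $0$, at scale $M$.

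I next renormalize: let $\bar u(x,t) := u(\rho x,\rho^2 t)/M$ for $(x,t) \in G_1$. Then $\bar u(0,0) = 1$ and $\bar u$ takes values in a fixed compact subinterval of $(0,\infty)$. A direct computation using the $1$-homogeneity {\bf (A3)} shows that $\bar u$ is a viscosity solution on $G_1$ of a $(\lambda,\Lambda)$-parabolic equation $\bar F(x,t,D^2\bar u) - \partial_t \bar u = \bar g$, where $\|\bar g\|_\infty \leq K^{\gamma-2}\rho^\gamma\,\gamma\,2^{(1+\alpha)(1-\gamma)}$ is universally bounded, thanks to the crucial cancellation $M^{\gamma-2}\rho^2 = K^{\gamma-2}\rho^\gamma \leq K^{\gamma-2}$. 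Classical interior $C^{1+\beta,(1+\beta)/2}$-regularity for viscosity solutions with bounded right-hand side (see, e.g., \cite{W1}) then yields $|\bar u(x,t) - \bar u(0,0)| \leq C(|x|+|t|^{1/2})$ on $G_{1/2}$. Unwinding the rescaling gives
$$|u(y,s) - u(0,0)| = M\,|\bar u(y/\rho,s/\rho^2) - \bar u(0,0)| \leq \tfrac{MC}{\rho}\bigl(|y|+|s|^{1/2}\bigr) = KC\bigl(|y|+|s|^{1/2}\bigr)$$
on $G_{\rho/2}$; and since $r \leq \rho/2$, restricting to $G_r$ delivers $|u-u(0,0)| \leq 2KC\,r$. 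The main obstacle is precisely this non-degenerate case: orchestrating the rescaling so that $\bar u$ is uniformly trapped in a compact set while $\bar g$ remains universally bounded is exactly what the choices $\mu = (2-\gamma)/2$ and $\rho = M/K$ are tuned to achieve.
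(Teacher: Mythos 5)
Your proof is correct and follows essentially the same dichotomy as the paper's: when $u(0,0)\lesssim r$, Corollary~\ref{undou} with the critical exponent $\mu=1/(1+\alpha)$ collapses to a linear bound; when $u(0,0)\gg r$, one rescales so that the solution is trapped in a compact subinterval of $(0,\infty)$, the equation becomes uniformly parabolic with a universally bounded right-hand side, and classical interior estimates apply. Your two organizational choices---splitting at $u(0,0)=2Kr$ rather than at $u(0,0)=r$, and rescaling by $\rho=u(0,0)/K$ while normalizing values by $u(0,0)$, rather than using $\kappa=u(0,0)$ for both---guarantee $r<\rho/2$ up front, so the intermediate-radius subcase the paper treats separately never arises; this is a mild but genuine streamlining of the same argument.
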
 

\begin{proof}
Fix $0< r \leq \frac{\sqrt{\kappa_0}}{2}$. Assuming $r \geq u(0,0)$, we apply estimate \eqref{asym_growth} for $\mu=1/(1+\alpha)<1$, to obtain
\begin{equation}
\sup\limits_{(x,t) \in G_r}u(x,t) \leq \left( Cr^{\frac{1}{1+\alpha}} +u(0,0)^{\frac{1}{1+\alpha}}  \right)^{1+\alpha} \leq \left(C+1  \right)^{1+\alpha}r,
\end{equation}
and \eqref{lipschitzest} follows from the triangular inequality.

In the case $0<r < u(0,0)$, we consider
$$
w(x,s):= \frac{u(\kappa x, \kappa^2 s)}{\kappa}, \qquad (x,t) \in G_1,
$$
for $\kappa:= u(0,0)$. Note that, up to a universal rescaling, we can assume $\kappa \leq 1/4$. In addition, from Remark \ref{rem}, we find that $w$ is a positive viscosity solution  of 
$$F_\kappa(x,t,D^2 w)  -\partial_s w = \kappa^{\gamma}\mathcal{B}_{\epsilon}(w)\,w^{\gamma-1} $$
in $G_1$, where $\epsilon =\varepsilon \kappa^{-\frac{1}{1+\alpha}}$ and
$$F_\kappa(x,t,M):=\kappa F(\kappa x,\kappa^2 t,\kappa^{-1}M). $$
Again from estimate \eqref{asym_growth}, with $\mu=1/ (1+\alpha)$, we have
$$
w(x,s) = \frac{u(\kappa x, \kappa^2 s)}{\kappa}\leq  \frac{\left(C \kappa^{\frac{1}{1+\alpha}}+u(0,0)^\frac{1}{1+\alpha}\right)^{1+\alpha}}{\kappa} = (C+1)^{1+\alpha} \; \mbox{ in }\; G_1.
$$
We also observe that $w(0,0)=w_\gamma(0,0)=1$ (cf. \eqref{ugamma}). Hence, by continuity in space and time for $\omega_\gamma$, there exists a universal parameter $\delta_0>0$, such that
$$
w(x,s) = w_\gamma(x,s)^{{1+\alpha}} \geq \left( \frac{1}{2}\right)^{{1+\alpha}},  \quad \forall (x,s) \in  G_{\delta_0}.
$$    
This implies that $w$ solves a $(\lambda,\Lambda)$-parabolic equation with bounded source term $f$,
$$
F_\kappa(x,t,D^2 w)  -\partial_t w = f \quad \mbox{ in }\; G_{\delta_0}.
$$
By classical regularity estimates, see for example \cite{W2}, it follows that
	\begin{equation}\nonumber
	 \sup\limits_{(x,t) \in G_r} |w(x,t)-w(0,0)| \leq C r,
	\end{equation}
for $0< r \leq \delta_0/2$, and so
	\begin{equation}\nonumber
	\sup\limits_{(x,t) \in G_r} |u(x,t)-u(0,0)| \leq C r,
	\end{equation}
for $0< r \leq \kappa\delta_0/2$. Finally, we consider radii $\kappa\delta_0/2 < r < \kappa$. Using estimate \eqref{asym_growth}, once again with $\mu=1/ (1+\alpha)$, we conclude
\begin{eqnarray*}
\sup\limits_{(x,t) \in G_r} |u(x,t)-u(0,0)| & \leq & \sup\limits_{(x,t) \in G_\kappa} u(x,t)+u(0,0) \\
&  \leq & \left\{(C+1)^{1+\alpha} + 1 \right\} \kappa \\
& \leq & \frac{2}{\delta_0}\left\{(C+1)^{1+\alpha} + 1 \right\}  r.
\end{eqnarray*}
Therefore, estimate \eqref{lipschitzest} holds for each $0< r \leq \frac{\sqrt{\kappa_0}}{2}$.

\end{proof}

As a consequence of Theorem \ref{Holderreg}, we obtain the following gradient bound, which shall be used crucially to establish this paper's main result. Note that the estimate therein is invariant under scaling for functions defined in Remark \ref{rem}. 
  
\begin{corollary}\label{riviera}
Let $u$ be a positive bounded viscosity solution of \eqref{Ee}. For every $0<\theta < \gamma$, there exists a constant $C$, depending only on $\theta$, $\gamma$, $\lambda$, $\Lambda$, $n$ and  $\|u\|_{L^\infty(G_1)}$, but not depending on $\varepsilon$, such that
\begin{eqnarray}\label{gradestim}
		|\nabla u(x,t)|^2 \leq Cu(x,t)^{\theta} \quad {\rm in} \ G_{1/2}.
\end{eqnarray}

\end{corollary}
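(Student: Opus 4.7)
My plan is to rescale the problem, at every point where $u > 0$, into a uniformly parabolic equation with bounded right-hand side, and then invoke interior gradient estimates. Fix $(x_0, t_0) \in G_{1/2}$ and set $\kappa := u(x_0, t_0)$; the inequality is trivial when $\kappa = 0$ and a direct consequence of classical regularity applied to $u$ when $\kappa$ is bounded below, so one reduces to the case $0 < \kappa \ll 1$. Choose the exponent $\beta := 1 - \theta/2 \in (1/2, 1)$ and define
$$w(x, s) \;:=\; \frac{u(x_0 + \kappa^\beta x,\, t_0 + \kappa^{2\beta} s)}{\kappa}, \qquad (x,s) \in G_1,$$
so that $w(0,0) = 1$. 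The same scaling analysis as in Remark \ref{rem} shows that $w$ is a viscosity solution of
$$F_\beta(x,s,D^2 w) - \partial_s w \;=\; \kappa^{\gamma + 2\beta - 2}\, \mathcal{B}_\varepsilon(\kappa w)\, w^{\gamma - 1},$$
where $F_\beta(x,s,M) := F(x_0 + \kappa^\beta x, t_0 + \kappa^{2\beta} s, M)$ is $(\lambda,\Lambda)$-parabolic with the same modulus $\omega$. The choice $\beta = 1 - \theta/2$ yields $\gamma + 2\beta - 2 = \gamma - \theta > 0$, so the right-hand side is $O(\kappa^{\gamma - \theta}) = O(1)$ \emph{provided} one can show $w$ is bounded above and bounded away from zero on a universal parabolic cylinder.

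To obtain such nondegeneracy, I invoke the sharp H\"older regularity of $v := u^{1/(1+\alpha)}$ established in Theorems \ref{thm_holder} and \ref{mu_regularity_time}. Select any $\mu \in (0,1)$ with $\beta\mu > 1/(1+\alpha) = 1 - \gamma/2$, which is possible precisely because $\theta < \gamma$. The H\"older estimates together with the resulting comparison $\kappa^{\beta\mu} \leq \kappa^{1/(1+\alpha)} = v(x_0, t_0)$ force
$$v(x_0 + \kappa^\beta x,\, t_0 + \kappa^{2\beta} s) \;\geq\; \tfrac{1}{2}\, v(x_0, t_0) \qquad \text{for all } (x,s) \in G_{\delta_0},$$
for some universal $\delta_0 > 0$, and hence $w \geq 2^{-(1+\alpha)}$ in $G_{\delta_0}$. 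The same $\beta\mu > 1/(1+\alpha)$ bound, combined with Corollary \ref{undou}, gives $\|w\|_{L^\infty(G_{\delta_0})} \leq C$ uniformly, so the rescaled right-hand side is universally bounded throughout $G_{\delta_0}$.

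At this stage $w$ satisfies a uniformly parabolic equation in nondivergence form with continuous coefficients (modulus $\omega$) and bounded source, and the classical $C^{1,\alpha_F}$ interior regularity theory (see \cite{W2}) yields $|\nabla w(0,0)| \leq C$. Reverting the scaling, $\nabla u(x_0, t_0) = \kappa^{1-\beta} \nabla w(0,0)$, whence
$$|\nabla u(x_0,t_0)|^2 \;\leq\; C\, \kappa^{2(1-\beta)} \;=\; C \, u(x_0,t_0)^\theta.$$
The main obstacle is the delicate balancing inherent in the exponent $\beta$: it must be large enough to tame the singular source (forcing $\beta \geq 1 - \gamma/2$ through the constraint $\gamma + 2\beta - 2 \geq 0$) and to permit an admissible H\"older exponent $\mu < 1$ with $\beta\mu > 1/(1+\alpha)$ (so the universal lower bound on $w$ holds), yet small enough that $1-\beta > 0$ delivers the desired decay $\kappa^{\theta/2}$ in the gradient. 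The strict inequality $\theta < \gamma$ is exactly what leaves room for all these requirements to be met simultaneously.
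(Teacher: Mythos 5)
Your proof is correct and follows essentially the same route as the paper: rescale around $(x_0,t_0)$ by the factor $u(x_0,t_0)^{1-\theta/2}$ in space (the paper writes this as $r_0 = (u(x_0,t_0)/M)^{1/(1+\beta)}$ with $\beta = \theta/(2-\theta)$, which is the same exponent since $1/(1+\beta)=1-\theta/2$), note that the strict inequality $\theta<\gamma$ makes the rescaled source term bounded, use the H\"older estimates for $u^{1/(1+\alpha)}$ to pin $w$ above and away from zero on a universal cylinder, and invoke classical interior regularity. The paper packages the nondegeneracy step and the interior estimate inside Theorem \ref{Holderreg} and applies it to $w$, whereas you unwind that dependency and reprove those two steps directly in the rescaled frame.
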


\begin{proof}
Fix the parameter $0<\theta < \gamma$ and let $\beta := \frac{\theta}{2-\theta} < \alpha$. Consider $(x_0,t_0)\in G_{1/2}$ and 
$$
M \geq \|u\|_\infty\kappa_0^{-\frac{1+\beta}{2}}
\quad \mbox{such that} \quad
r_0:=\left(\frac{u(x_0,t_0)}{M}\right)^\frac{1}{1+\beta} \leq \sqrt{\kappa_0}.
$$
Now, we define in $G_1$ the rescaled function
	$$
	w(x,t)=\frac{u(x_0 + r_0x, t_0 + r_0^2t)}{r_0^{1+\beta}}.
	$$
By estimate \eqref{asym_growth}, choosing $\mu= \frac{1+\beta}{1+\alpha} < 1$, we have
\begin{eqnarray*}
	\sup_{G_1}w & = & \sup_{G_1}\frac{u(x_0 + r_0x, t_0 + r_0^2t)}{r_0^{1+\beta}} \\
	& \leq & \frac{\left( C r_0^{\frac{1+\beta}{1+\alpha}} + u(x_0,t_0)^{\frac{1}{1+\alpha}} \right)^{1+\alpha}}{r_0^{1+\beta}}\\
	& \leq & \left( C + M^{\frac{1}{1+\alpha}}  \right)^{1+\alpha}.
\end{eqnarray*}

Moreover, using Remark \ref{rem}, $w$ is a positive viscosity solution of
$$F_0(x,t,D^2w)-\partial_tw= r_0^{(1+\beta) (\gamma -1) + 1-\beta}\B_{\epsilon}(w)w^{\gamma-1} $$
in $G_1$, where $\epsilon =\varepsilon r_0^{-\mu}$ and 
$$F_0(x,t,M):=r_0^{1-\beta}F \left(x_0+r_0x,t_0+r_0^2t, r_0^{\beta-1}M \right).$$
Note that 
$$
(1+\beta) (\gamma -1) + 1-\beta >0,
$$ 
since $\theta < \gamma$. By Theorem \ref{Holderreg}, we have
$$
|\nabla u(x_0,t_0)| = r_0^{\beta} | \nabla w(0,0)|\leq Cr_0^{\beta} = C  \left(\frac{u(x_0,t_0)}{M}\right)^{\frac{\beta}{1+\beta}} = \overline{C} u(x_0,t_0)^{\frac{\theta}{2}},
$$
for a universal constant $\overline{C}>0$.

\end{proof}

\section{Sharp local $C^{1+\beta, \frac{1+\beta}{2}}-$regularity estimates}\label{optsection} 

In this section, we derive asymptotically optimal regularity estimates for viscosity solutions of \eqref{Ee}, which are uniform in $\varepsilon$. Let $0< \alpha_F <1$ be an exponent such that viscosity solutions of 
$$F(x,t,D^2 h)=\partial_t h \quad \mbox{in} \quad G_1$$
are of class  $C^{1+\alpha_F, \frac{1+\alpha_F}{2}}$, \textit{i.e.}, they satisfy
$$\sup\limits_{(x,t) \in G_\rho (y,s)} \left| h(x,t)-h(y,s)-\nabla h(y,s)\cdot (x-y) \right| \leq C\|h\|_\infty \rho^{1+\alpha_F},$$
for any $(y,s) \in G_{1/2}$ and $0< \rho \leq 1/4$. 
Recall $\alpha:=\frac{\gamma}{2-\gamma}.$

\begin{theorem}\label{localregularity}
Every positive bounded viscosity solution $u$ of \eqref{Ee} is of class $C^{1+\beta, \frac{1+\beta}{2}}$ in $G_{1/2}$, for any $\beta < \min\{\alpha,\alpha_F\}$. Furthermore, there exists a constant $C$, depending only on $\gamma$, $\lambda$, $\Lambda$, $n$ and  $\|u\|_{L^\infty(G_1)}$, but not depending on $\varepsilon$, such that \begin{equation}\label{mainestimate}
	\sup\limits_{(x,t) \in G_\rho(y,s)} |u(x,t)-u(y,s)-\nabla u(y,s)\cdot (x-y)| \leq C \rho^{1+\beta},
\end{equation}
for any $(y,s) \in G_{1/2}$ and $0< \rho \leq \frac{\sqrt{\kappa_0}}{2}$.
\end{theorem}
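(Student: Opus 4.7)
After translation, take $(y,s)=(0,0)$. Fix $\beta<\min\{\alpha,\alpha_F\}$; since $\beta<\alpha=\gamma/(2-\gamma)$ is equivalent to $2\beta/(1+\beta)<\gamma$, we may also fix an exponent $\theta\in\bigl(2\beta/(1+\beta),\gamma\bigr)$. The strategy is a dichotomy on the size of $u(0,0)$ against $\rho^{1+\beta}$, reducing the problem to either a decay estimate at a free boundary point or a rescaled regularity estimate at an interior point.

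\emph{Near the free boundary: $u(0,0)\le \rho^{1+\beta}$.} Apply Corollary \ref{undou} with the admissible exponent $\mu=(1+\beta)/(1+\alpha)<1$; the hypothesis yields $u(0,0)^{1/(1+\alpha)}\le \rho^{\mu}$, so \eqref{asym_growth} gives $\sup_{G_\rho} u\le C\rho^{1+\beta}$. From Corollary \ref{riviera},
\begin{equation*}
|\nabla u(0,0)|\le Cu(0,0)^{\theta/2}\le C\rho^{\theta(1+\beta)/2}\le C\rho^{\beta},
\end{equation*}
where the last inequality uses $\theta>2\beta/(1+\beta)$ together with $\rho\le 1$. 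The triangle inequality then closes this case.

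\emph{Away from the free boundary: $u(0,0)>\rho^{1+\beta}$.} Set $r_0:=u(0,0)^{1/(1+\beta)}>\rho$ and rescale via Remark \ref{rem} with scaling exponent $1+\beta$ and parameter $\kappa=r_0$, obtaining $w(x,t):=u(r_0 x,r_0^2 t)/r_0^{1+\beta}$. Then $w(0,0)=1$, $w$ is uniformly bounded (again by Corollary \ref{undou}), and $w$ solves
\begin{equation*}
F_{r_0}(x,t,D^2 w)-\partial_t w=r_0^{\gamma-\beta(2-\gamma)}\,\mathcal{B}_{\epsilon}(w)\,w^{\gamma-1}, \qquad \epsilon=\varepsilon\, r_0^{-(1+\beta)/(1+\alpha)}.
\end{equation*}
The exponent $\gamma-\beta(2-\gamma)$ is strictly positive, precisely because $\beta<\alpha$, so the prefactor is universally bounded. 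By Theorem \ref{Holderreg} there exists a universal $\delta_0>0$ with $w\ge 1/2$ on $G_{\delta_0}$; on this cylinder, $w$ satisfies a $(\lambda,\Lambda)$-parabolic equation with uniformly bounded right-hand side. Classical approximation by $F$-caloric functions, available at every exponent strictly below $\alpha_F$, supplies
\begin{equation*}
\sup_{G_\rho}|w(x,t)-w(0,0)-\nabla w(0,0)\cdot x|\le C\rho^{1+\beta}, \qquad 0<\rho\le \delta_0/2,
\end{equation*}
which, after undoing the scaling, yields \eqref{mainestimate} for all $\rho\le \delta_0 r_0/2$. The transitional range $\delta_0 r_0/2<\rho<r_0$ reduces back to the first case, since $u(0,0)=r_0^{1+\beta}\le(2/\delta_0)^{1+\beta}\rho^{1+\beta}$ then lets the same growth-plus-gradient argument apply with a larger universal constant.

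\emph{Principal difficulty.} The real content is to reconcile both regimes through a single exponent $\beta$. Regime 2 demands $\beta<\alpha$, to render the rescaled source $r_0^{\gamma-\beta(2-\gamma)}\mathcal{B}_\epsilon(w)w^{\gamma-1}$ universally bounded on $\{w\ge 1/2\}$, and $\beta<\alpha_F$, to run the $F$-caloric approximation at the target exponent. Regime 1 and the matching at $\rho\approx r_0$ impose $\theta(1+\beta)/2\ge\beta$ with $\theta<\gamma$, which is again equivalent to $\beta<\alpha$. Both constraints collapse to the hypothesis $\beta<\min\{\alpha,\alpha_F\}$, after which what remains is standard rescaling bookkeeping.
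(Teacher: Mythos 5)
Your proof is correct and follows essentially the same dichotomy as the paper's: split according to whether $u(0,0)\lesssim\rho^{1+\beta}$ (close to the free boundary, handled by Corollaries \ref{undou} and \ref{riviera}) or $u(0,0)\gtrsim\rho^{1+\beta}$ (nondegenerate regime, handled by rescaling to a uniformly parabolic equation with bounded source and invoking $C^{1+\alpha_F}$ interior estimates), then patch the transitional radii. The only bookkeeping difference is that the paper introduces a large normalization constant $M$ in the definition of $\rho_\star=[u(y,s)/M]^{1/(1+\beta)}$ so that $\rho_\star\le\sqrt{\kappa_0}/2$ automatically and the rescaled $w$ is defined on all of $G_1$; you set $M=1$ implicitly, which needs the same normalization to be rigorous, and you take $\theta$ strictly above $2\beta/(1+\beta)$ where the paper uses equality -- both cosmetic.
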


\begin{proof}
Let $(y,s) \in G_{1/2}$ and define
\begin{equation} \label{adriatico}
\rho_\star := \left[ \frac{u(y,s)}{M} \right]^{\frac{1}{1+\beta}},
\end{equation}
for $M>1$ large enough such that $\rho_\star \leq \frac{\sqrt{\kappa_0}}{2}$.

We split the proof into two cases, considering first $\rho_\star \leq \rho \leq \frac{\sqrt{\kappa_0}}{2}$ (see Fig. \ref{fig2}). We use Corollary \ref{undou}, with $\mu=\frac{1+\beta}{1+\alpha}<1$, and Corollary \ref{riviera}, with $\theta = \frac{2\beta}{1+\beta}<\gamma$, to derive
\begin{equation}\nonumber
\begin{array}{rl}
 & \sup\limits_{(x,t) \in G_\rho(y,s)} |u(x,t)-u(y,s)-\nabla u(y,s)\cdot (x-y)| \\[0.4cm] 
 \leq & \sup\limits_{(x,t) \in G_\rho(y,s)} u(x,t) + u(y,s) +|\nabla u(y,s)| \rho \\[0.4cm]
 \leq & \left( C \rho^{\frac{1+\beta}{1+\alpha}}+u(y,s)^{\frac{1}{1+\alpha}} \right)^{1+\alpha} + u(y,s) + C u(y,s)^{\frac{\beta}{1+\beta}} \rho \\[0.4cm]
 \leq & \left( C \rho^{\frac{1+\beta}{1+\alpha}}+(M{\rho}^{1+\beta})^{\frac{1}{1+\alpha}} \right)^{1+\alpha} + M{\rho}^{1+\beta} + C (M{\rho}^{1+\beta})^{\frac{\beta}{1+\beta}} \rho \\[0.4cm]
 \leq & \left[\left( C +M^{\frac{1}{1+\alpha}} \right)^{1+\alpha} + M + C M^{\frac{\beta}{1+\beta}} \right] {\rho}^{1+\beta}.
\end{array}
\end{equation}

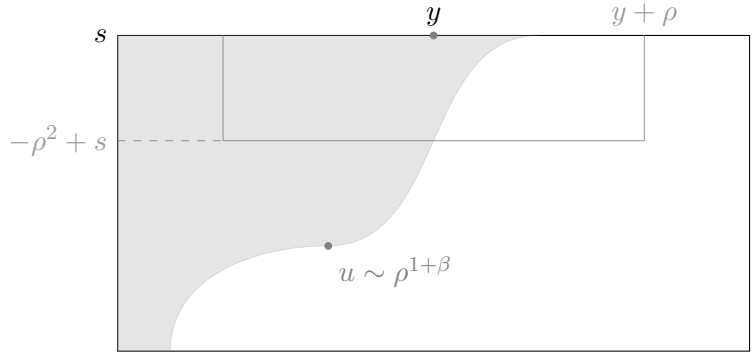
\begin{figure}[h!]
\centering
\hspace*{-1.5cm}
\begin{tikzpicture}[scale=1.4]
%quadrado
\draw (-3,3)node[left] {$s$} -- (-3,0) -- (3,0) -- (3,3) -- (-3,3);
%cilindro
\draw[gray!80] (-2,3) -- (-2,2) -- (2,2) -- (2,3) node[above] {$y+\rho$};
%coordenadas
\draw[gray!80, dashed] (-3,2) node[left] {$-\rho^2+s$} -- (-2,2);
%pontos
\filldraw [gray] (0,3) circle (0.9pt) node[above, black] {$y$};
\filldraw [gray] (-1,1) circle (0.9pt) node[below right] {$u \sim \rho^{1+\beta}$};
%curva
\filldraw [gray, opacity=0.2] (-3,3) -- (-3,0) -- (-2.5,0) to [out=90,in=180] (-1,1)  to [out=0,in=180] (1,3) -- (-3,3);
\end{tikzpicture}
\caption{Oscillation estimates in $G_{\rho}(y,s)$ for the singular case $u(y,s) \lesssim \rho^{1+\beta}$.}
\label{fig2}
\end{figure}

\bigskip

Next, we consider the case $0< \rho < \rho_\star$. The function $w$ defined by
$$w(x,t):= \frac{u(y+\rho_\star x, s+\rho_\star^2 t)}{\rho_\star^{1+\beta}}$$
is a positive viscosity solution in $G_1$ of 
$$
F_\kappa(x,t,D^2 w)  -\partial_s w = \rho_\star^{(1+\beta)(\gamma -1)+1-\beta}\mathcal{B}_{\epsilon}(w)\,w^{\gamma-1} ,$$
where $\epsilon =\varepsilon \rho_\star^{-\frac{1+\beta}{1+\alpha}}$.
From estimate \eqref{asym_growth}, with $\mu=\frac{1+\beta}{1+\alpha}$, we have that
\begin{eqnarray*}
w(x,t) & =  & \frac{u(y+\rho_\star x, s+\rho_\star^2 t)}{\rho_\star^{1+\beta}}\\
& \leq & \frac{\left(C \rho_\star^{\frac{1+\beta}{1+\alpha}}+u(y,s)^\frac{1}{1+\alpha}\right)^{1+\alpha}}{\rho_\star^{1+\beta}} \\
& = & (C+M^\frac{1}{1+\alpha})^{1+\alpha}\\
& \leq & 2^\alpha \left( C^{1+\alpha} +M \right),
\end{eqnarray*}
for $(x,t) \in G_1$. In addition, we observe that $w(0,0)=M>1$. Hence, from Proposition \ref{Holderreg},  there exists a universal parameter $\delta_0$, such that
$$
w(x,s)  \geq \frac{1}{2},  \quad \mbox{for each } \; (x,s) \in  G_{\delta_0}.
$$    
This implies that $w$ solves a uniformly parabolic equation with bounded source term $f$, namely
$$
F_\kappa(x,s,D^2 w)  -\partial_t w = f(x,s) \quad \mbox{ in }\; G_{\delta_0}.
$$
By classical regularity estimates (see for example \cite{W2} and \cite{daST}) and since $\beta < \alpha_F$, we have
\begin{equation}\nonumber
	\sup\limits_{(x,t) \in G_r} \left| w(x,t)-w(0,0)-\nabla w(0,0)\cdot x) \right| \leq C r^{1+\beta},
\end{equation}
for $0< r \leq \delta_0/2$, and so
\begin{equation}\nonumber
	\sup\limits_{(x,t) \in G_r(y,s)} |u(x,t)-u(y,s)-\nabla u(y,s)\cdot (x-y)| \leq C r^{1+\beta},
\end{equation}
for $0< r \leq \rho_\star\delta_0/2$. Finally, for radii $\rho_\star\delta_0/2 < r < \rho_\star$, we conclude
\begin{equation}\nonumber
	\begin{array}{rl}
 & \sup\limits_{(x,t) \in G_r(y,s)} |u(x,t)-u(y,s)-\nabla u(y,s)\cdot (x-y)| \\[0.5cm] 
 \leq & \sup\limits_{(x,t) \in G_{\rho_\star}(y,s)} |u(x,t)-u(y,s)-\nabla u(y,s)\cdot (x-y)| \\[0.5cm]
 \leq & C \rho_\star^{1+\beta} \\[0.2cm]
 \leq & \overline{C} \, r^{1+\beta},
\end{array}
\end{equation}
where the second inequality follows from the first case.
Therefore, estimate \eqref{mainestimate} holds for every $0<r < \rho_\star$, which completes the proof.

\end{proof}

\section{The limiting free boundary problem}\label{limitingsec}

In this final section, we pass to the limit in \eqref{Ee} as $\varepsilon \searrow 0$. We will show that viscosity solutions $u_\varepsilon$ of \eqref{Ee} converge to a function $u_0$ solving \eqref{E0} in the viscosity sense. Then we obtain regularity estimates for $u_0$ from the regularity estimates obtained for viscosity solutions of the penalised equation \eqref{Ee}, thus getting sharp regularity results for limiting solutions of \eqref{E0}. 

We start with the compactness of the sequence $\left( u_\varepsilon \right)_\varepsilon$. From Remark \ref{ABP}, we know the sequence is uniformly bounded, and from Theorem \ref{Holderreg}, it is equicontinuous. Using Ascoli--Arzel\`a Theorem, we conclude there exists a continuous function $u_0$ such that, up to a subsequence 
$$u_\varepsilon  \longrightarrow u_0,$$
uniformly on compacts.

\begin{theorem}\label{bara}
The limiting function $u_0$ is a nonnegative bounded viscosity solution of \eqref{E0}.  At a free boundary point $(y,s)$, $u_0$ is of class $C^{1+\beta,\frac{1+\beta}{2}}$, for every
$$\beta < \frac{\gamma}{2-\gamma},$$
and
\begin{equation}\label{hulk}
	\sup\limits_{(x,t) \in G_\rho(y,s)} u(x,t) \leq C \rho^{1+\beta},
\end{equation}
for any $G_\rho(y,s) \Subset \Omega_T$. Moreover,  $u_0 $ is locally of class $C^{1+\beta, \frac{1+\beta}{2}}$, for every
$$\beta < \min \left\{ \frac{\gamma}{2-\gamma},\alpha_F \right\},$$
and 
\begin{equation}\label{ioca}
	\sup\limits_{(x,t) \in G_\rho(y,s)} |u_0(x,t)-u_0(y,s)-\nabla u_0(y,s)\cdot (x-y)| \leq C \rho^{1+\beta},
\end{equation}
for any $G_\rho(y,s) \Subset \Omega_T$. The constant $C$ depends only on $\gamma$, $\lambda$, $\Lambda$, $n$ and  $\|u_0\|_{\infty}$.
\end{theorem}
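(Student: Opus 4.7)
The plan is to leverage the uniform-in-$\varepsilon$ estimates established in the previous sections and pass them to the limit $\varepsilon\searrow 0$. First, the uniform $L^\infty$ bound from Remark \ref{ABP}, combined with the uniform Lipschitz-in-space, $\frac{1}{2}$-Hölder-in-time estimate of Theorem \ref{Holderreg}, yields equicontinuity of $(u_\varepsilon)_\varepsilon$ on compact subsets of $\Omega_T$, so Arzelà--Ascoli produces a subsequence converging locally uniformly to the continuous $u_0$, as already noted before the statement. Invoking further the uniform $C^{1+\beta,(1+\beta)/2}$ bounds from Theorem \ref{localregularity} and a diagonal extraction, one may assume additionally that $\nabla u_\varepsilon \to \nabla u_0$ locally uniformly. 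Estimate \eqref{ioca} is then obtained by passing \eqref{mainestimate} directly to the limit, giving that $u_0$ is locally of class $C^{1+\beta,(1+\beta)/2}$ for any $\beta<\min\{\alpha,\alpha_F\}$.

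Next, I would verify that $u_0$ satisfies \eqref{E0} in the viscosity sense. On any compact set $K\Subset\{u_0>0\}$, uniform convergence ensures $u_\varepsilon>\sigma_0\varepsilon^{1+\alpha}$ for all sufficiently small $\varepsilon$, so that $\mathcal{B}_\varepsilon(u_\varepsilon)\equiv\gamma$ on $K$, and $u_\varepsilon$ solves $F(x,t,D^2u_\varepsilon)-\partial_t u_\varepsilon=\gamma u_\varepsilon^{\gamma-1}$ there. Since the right-hand side is continuous in $u$ on $\{u>0\}$, the classical stability of viscosity solutions under locally uniform convergence transfers the equation to $u_0$ on $\{u_0>0\}$. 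For the free boundary condition on $\{u_0=0\}$, the identity $u_0=0$ is immediate, while $|\nabla u_0|=0$ follows by passing the gradient bound of Corollary \ref{riviera} to the limit, yielding $|\nabla u_0|^2\leq C u_0^\theta$, which vanishes whenever $u_0=0$.

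For the sharper growth estimate \eqref{hulk} at a free boundary point $(y,s)$, the key observation is that $u_0(y,s)=0$ and $\nabla u_0(y,s)=0$, so \eqref{ioca} collapses to the desired bound, but with the improved exponent range $\beta<\alpha=\gamma/(2-\gamma)$. To unlock this improvement, I would revisit the proof of Theorem \ref{localregularity}: the parameter $\rho_\star$ defined in \eqref{adriatico} vanishes at $(y,s)$, so one is always in its first case, which invokes only Corollary \ref{undou} and Corollary \ref{riviera}. Both are valid for any $\beta<\alpha$, and the restriction $\beta<\alpha_F$ that appears in Case 2 (via the classical $C^{1+\alpha_F,(1+\alpha_F)/2}$ estimate for $F$-caloric functions) is never invoked. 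Passing the refined estimate to the limit yields \eqref{hulk} for every $\beta<\gamma/(2-\gamma)$.

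The main technical delicacy lies in the gradient passage to the limit in the first step: extracting a common subsequence along which both $u_\varepsilon$ and $\nabla u_\varepsilon$ converge locally uniformly is handled by the uniform $C^{1+\beta,(1+\beta)/2}$ bounds together with Arzelà--Ascoli. A more conceptually subtle point is the free boundary observation that at $(y,s)$ one need not invoke the $F$-caloric regularity exponent $\alpha_F$, which yields the optimal nondegenerate rate $\beta<\gamma/(2-\gamma)$ even in cases where $\alpha_F<\alpha$; this is what allows \eqref{hulk} to hold with a strictly wider range of exponents than \eqref{ioca}.
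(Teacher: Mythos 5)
Your proof takes essentially the same route as the paper: compactness from the uniform bounds of Remark~\ref{ABP} and Theorem~\ref{Holderreg}, stability of viscosity solutions on $\{u_0>0\}$, and passage to the limit in the uniform-in-$\varepsilon$ estimates of Sections~\ref{loc_reg_sec}--\ref{optsection}. Two small remarks. First, to conclude $\mathcal{B}_\varepsilon(u_\varepsilon)\equiv\gamma$ on a compact subset of $\{u_0>0\}$ you need $u_\varepsilon > (1+\sigma_0)\varepsilon^{1+\alpha}$, not merely $u_\varepsilon>\sigma_0\varepsilon^{1+\alpha}$, since $\varrho$ is supported in $[0,1]$ and $\mathcal{B}_\varepsilon(s)=\gamma$ only once $(s-\sigma_0\varepsilon^{1+\alpha})/\varepsilon^{1+\alpha}\ge 1$; this is harmless here. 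Second, for \eqref{hulk} the paper simply passes Corollary~\ref{undou} (estimate \eqref{asym_growth}) to the limit: at a free boundary point $u_0(y,s)=0$, so \eqref{asym_growth} gives $\sup_{G_\rho(y,s)} u_0 \le C^{1+\alpha}\rho^{\mu(1+\alpha)}$ for every $\mu<1$, which is exactly \eqref{hulk} for every $\beta<\alpha$. Your route — re-examining Case~1 of Theorem~\ref{localregularity} and noting $\rho_\star$ degenerates so $\alpha_F$ is never invoked — reaches the same conclusion, since that case is built precisely on Corollaries~\ref{undou} and~\ref{riviera}, but the direct passage through \eqref{asym_growth} is shorter and avoids the slightly awkward step of letting $\rho_\star^\varepsilon\to 0$ before $\varepsilon\to0$.
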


\begin{proof}
Fix a point $(x_0,t_0) \in \{u>0\} \cap \Omega_T$. Let $u_0(x_0,t_0)= c_0 > 0$ and, by continuity, select $\rho_0 >0$ such that 
$$u_0(x,t) > \frac{c_0}{2} \quad \mbox{in } G_{\rho_0} (x_0,t_0).$$
By the uniform convergence $u_\varepsilon  \longrightarrow u_0$ on compact sets, for $\varepsilon \ll 1$, we have
$$u_\varepsilon (x,t) > \frac{c_0}{4} > (1+\sigma_0) \varepsilon^{1+\alpha}$$ 
and thus $u_\varepsilon$ solves 
$$F(x,t,D^2u_\varepsilon)  -\partial_t u_\varepsilon = \gamma u_\varepsilon^{\gamma-1} \quad \mbox{in } G_{\rho_0} (x_0,t_0)$$
in the viscosity sense. By the stability of viscosity solutions under uniform limits (cf. \cite{IS}), we conclude $u_0$ is indeed a viscosity solution of \eqref{E0}.

The regularity at a free boundary point and estimate \eqref{hulk} follow from passing to the limit in \eqref{asym_growth}. 

That $u_0 \in C_{loc}^{1+\beta, \frac{1+\beta}{2}}$, for any $\beta < \min \left\{ \frac{\gamma}{2-\gamma},\alpha_F \right\}$, and the validity of estimate \eqref{ioca} are direct consequences of the uniform in $\varepsilon$ nature of the estimate in Theorem \ref{localregularity}.

\end{proof}

\medskip

{\small \noindent{\bf Acknowledgments.} DJA thanks the Abdus Salam International Centre for Theoretical Physics (ICTP) for the hospitality during his research visits. DJA is partially supported by Conselho Nacional de Desenvolvimento Científico e Tecnológico (CNPq) grant 311138/2019-5 and Paraíba State Research Foundation (FAPESQ) grant 2019/0014.  GSS is supported by a Capes PhD scholarship. JMU is partially supported by the King Abdullah University of Science and Technology (KAUST) and by the Centre for Mathematics of the University of Coimbra (UIDB/00324/2020, funded by the Portuguese Government through FCT/MCTES).}

\medskip

\bibliographystyle{amsplain, amsalpha}

\end{document}